\DeclareMathOperator{\GL}{GL}
\DeclareMathOperator{\AGL}{AGL}
\DeclareMathOperator{\SL}{SL}
\DeclareMathOperator{\ULT}{ULT}
\DeclareMathOperator{\centraliser}{C}
\DeclareMathOperator{\class}{cl}
\DeclareMathOperator{\groupcentre}{\mathcal{Z}}
\DeclareMathOperator{\commutinggraph}{\Gamma}
\DeclareMathOperator{\distance}{d}
\DeclareMathOperator{\diameter}{diam}
\newcommand{\Wr}{\,\mathrm{wr}\,}
\newcommand{\cent}[2]{\centraliser_{#1}(#2)}
\newcommand{\cl}[2]{\class_{#1}(#2)}
\newcommand{\Z}{\mathbb{Z}}
\newcommand{\gcentre}[1]{\groupcentre(#1)}
\newcommand{\cg}[1]{\commutinggraph(#1)}
\newcommand{\dist}[2]{\distance(#1,#2)}
\newcommand{\diam}[1]{\diameter(#1)}
\newcommand{\W}{\mathcal{W}}
\newcommand{\mS}{\mathcal{S}}
\newcommand{\mQ}{\mathcal{Q}}
\newcommand{\ba}{\mathbf{a}}
\newcommand{\bbb}{\mathbf{b}}
\newcommand{\bx}{\mathbf{x}}
\newcommand{\by}{\mathbf{y}}
\newcommand{\al}{\alpha}
\newcommand{\be}{\beta}
\renewcommand{\leq}{\leqslant}
\renewcommand{\geq}{\geqslant}
\newtheorem{theorem}{Theorem}[section]
\newtheorem{lemma}[theorem]{Lemma}
\newtheorem{proposition}[theorem]{Proposition}
\newtheorem{corollary}[theorem]{Corollary}
\theoremstyle{plain}
\newtheorem{construction}[theorem]{Construction}
\title{On Bounding the Diameter of the Commuting Graph of a Group}
\author{Michael Giudici and Aedan Pope\footnote{This work was completed while the second author was an honours student at the University of Western Australia. This work forms part of a ARC Discovery Project held by the first author.} \\
        School of Mathematics and Statistics\\ 
        The University of Western Australia\\ 
        35 Stirling Highway\\
		Crawley WA 6009\\
		michael.giudici@uwa.edu.au, aedanpope@gmail.com}
\date{}
\begin{document}

\maketitle

\begin{abstract}
The commuting graph of a group $G$ is the simple undirected graph whose vertices are the non-central elements of $G$ and two distinct vertices are adjacent if and only if they commute. It is conjectured by Jafarzadeh and Iranmanesh that there is a universal upper bound on the diameter of the commuting graphs of finite groups when the commuting graph is connected. In this paper we determine upper bounds on the diameter of the commuting graph for some classes of groups to rule them out as possible counterexamples to this conjecture. We also give an example of an infinite family of groups with trivial centre and diameter $6$, the previously largest known diameter for an infinite family was $5$ for $S_n$.
\end{abstract}

\section{Introduction}

For a group $G$, we denote the \emph{center} of $G$ by $\gcentre{G}$ and $\gcentre{G}=\{x\in G \mid xy=yx~\forall y \in G \}$. The \emph{commuting graph} of a group, denoted by $\cg{G}$, is the simple undirected graph whose vertices are the non-central elements of $G$ and two distinct vertices $x$ and $y$ are adjacent if and only if $xy=yx$. In particular, the set of neighbours of $x$ is the set of all non-central elements of the \emph{centraliser} of $x$ in $G$, that is, of $\cent{G}{x}=\{g\in G\mid xg=gx\}$.

A \emph{path} in a graph is an ordered list $a_1, a_2, \ldots , a_k$ of vertices where there is an edge in the graph from $a_i$ to $a_{i+1}$ for all $i$; the path is said to between $a_1$ and $a_k$ and of length $k-1$. A graph is \emph{connected} if and only if there exists a path between any two distinct vertices in the graph. The \emph{distance} between two vertices of a graph, say $x$ and $y$, is the length of the shortest path between $x$ and $y$ in the graph if such a path exists and is $\infty$ otherwise; this is denoted $\operatorname{d}(x,y)$. The \emph{diameter} of a graph $\Gamma$ is the maximum distance between any two vertices in the graph, and is denoted $\diam{\Gamma}=\max\{\operatorname{d}(x,y)\mid x,y \in \Gamma\}$.

Commuting graphs were first studied by Brauer and Fowler in 1955 \cite{brauer} to prove results fundamental to the Classification of Finite Simple Groups. Further applications include \cite{fischer1971,segev2002}. In 2002 Segev and Seitz \cite{segev2002} began the investigation of commuting graphs in their own right by proving that, for all finite simple classical groups $G$ over a field of size at least $5$, the diameter of $\cg{G}$ is at most 10 when $\cg{G}$ is connected. It is not known how sharp this bound is. Iranmanesh and Jafarzadeh \cite{iranmanesh2008} continue this investigation and determine the conditions for the commuting graph of a symmetric or alternating group to be connected and that the diameter is at most $5$ in these cases. They conjecture that there is a universal upper bound on the diameter of a connected commuting graph for any finite nonabelian group.

This unresolved conjecture is our primary motivation, and has more supporting evidence. The group of all invertible matrices with size at least $3$ over a field of size at least $3$ has commuting graph with diameter between $4$ and $6$ when connected, and there is an upper bound on the diameter of the group of those with determinant $1$ \cite{akbari2006}.  Again, it is not known if these bounds are sharp. The group of all invertible matrices with size at least $2$ over the integers modulo some composite positive integer has commuting graph with diameter exactly $3$ \cite{giudici2010}. If $G$ is a non-trivial finite solvable group with trivial centre and no cyclic or generalised quaternion Sylow subgroups then $\cg{G}$ is connected with diameter at most $7$ \cite{woodcock2010}. The situation is completely different for semigroups, with Ara\'ujo, Kinyon and Konieczny \cite{araujo} proving that for all positive integers $n$, there is a semigroup $S$ such that the diameter of $\cg{S}$ is $n$.

We prove more theorems pertaining to this conjecture, beginning with the commuting graphs that result from building up groups out of smaller groups. We denote the group of all permutations on a set of size $n$ by $S_n$, and the group of all even permutations on such a set by $A_n$. For any $H\leqslant S_n$ we denote the wreath product of a group $G$ with $H$ by $G \Wr H=G^n\rtimes H$.
 
\begin{theorem}
\label{pWreathProduct}
Consider $A \Wr S_n$ for some positive integer $n \geq 2$ and finite group $A$ of even order and trivial centre. If for every prime $p \neq 2$ dividing the order of $A$, the number of conjugacy classes of elements of order $p$ in $A$ is less than $n$, then the commuting graph of $A \Wr S_n$ is connected with diameter at most $7$.
\end{theorem}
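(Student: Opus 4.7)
The approach is to exhibit a clique $\mathcal H$ in $\cg G$ and show every non-central element is within distance three of some element of $\mathcal H$, giving diameter at most $3+1+3=7$. Since $|A|$ is even, fix an involution $t \in A$, and for $i=1,\ldots,n$ let $h_i \in A^n$ be the tuple with $t$ in the $i$-th coordinate and the identity elsewhere. The elements $h_i$ have pairwise disjoint support in $A^n$ and hence commute, so $\mathcal H = \{h_1,\ldots,h_n\}$ forms a clique in $\cg G$. The proof then reduces to the \emph{main lemma}: for every non-central $x = (a_1,\ldots,a_n)\sigma \in G$, some $h_i$ satisfies $\dist{x}{h_i} \leq 3$.

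I would prove this by case analysis on $x$. A direct computation shows that $h_i \in \cent G x$ if and only if $\sigma(i)=i$ and $a_i \in \cent A t$, so whenever $\sigma$ has a fixed point $i$ with $a_i \in \cent A t$ (in particular whenever $a_i = 1$) we have $\dist{x}{h_i}\leq 1$. Next, if some cycle of $\sigma$ has cycle product of even order in $A$, a suitable power of $x$ furnishes an involution in $\cent G x$ lying in $A^n$, and this involution can be joined to $\mathcal H$ in one or two further steps via a coordinate where the supports leave room. The delicate case is when $x$ has odd order and no fixed point $i$ of $\sigma$ satisfies $a_i \in \cent A t$; replacing $x$ by a prime-power of itself reduces to $x$ of some odd prime order $p$. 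Then $x$ is described by at most $n$ elements of $A$ of order dividing $p$, namely the cycle products of the $p$-cycles of $\sigma$ together with the components at the fixed points. By hypothesis, the number of conjugacy classes of order-$p$ elements in $A$ is strictly less than $n$, so pigeonhole yields two positions $i \neq j$ whose associated data are conjugate in $A$, say $a_j = g a_i g^{-1}$; then $y = (1,\ldots,g,\ldots,g^{-1},\ldots,1)(i\,j)$ lies in $\cent G x$ and provides a pivot from which an element of $\mathcal H$ can be reached in two further steps.

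The main obstacle will be verifying this last case uniformly. One must track how the residual cycle structure of $\sigma$ on the $n-2$ positions outside $\{i,j\}$ constrains the pivot $y$, and then exhibit an explicit path of length two from $y$ to a suitable $h_k$; this requires a bookkeeping argument on supports together with an analysis of how the conjugating element $g$ interacts with $t$. The hypothesis that there are strictly fewer than $n$ conjugacy classes of order-$p$ elements is used precisely here: it forces a collision among the $n$ data points associated with $x$, while the strictness of the inequality leaves at least one ``free'' coordinate available to seat the terminal $h_k$.
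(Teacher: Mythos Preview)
Your approach differs substantially from the paper's. The paper does not anchor to a fixed clique; instead it proves that every element of prime order commutes with \emph{some} involution, verifies that $G=A\Wr S_n$ has trivial centre and at least two conjugacy classes of involutions, and then invokes a theorem of Brauer and Fowler to the effect that in such a group any two involutions lie at distance at most $3$ in $\cg{G}$. The bound $2+3+2=7$ follows immediately. The conjugacy-class hypothesis is used only in the subcase $\sigma=1$ with all $a_i\neq 1$; when $\sigma\neq 1$ the paper exhibits an explicit commuting involution $(t,\,t^{a_1},\,t^{a_1a_2},\,\ldots,\,t^{a_1\cdots a_{p-1}},\,1,\,\ldots,\,1)$ along a $p$-cycle, with no pigeonhole needed.

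Your plan has a genuine gap in the pigeonhole step. When $x$ has odd prime order $p$ and $\sigma\neq 1$, the permutation $\sigma$ is a product of $p$-cycles and fixed points, and your ``data'' comprise one cycle product per $p$-cycle together with the components at the fixed points: that is only $n-(p-1)\cdot(\text{number of $p$-cycles})$ items, which can be far smaller than $n$. With only the hypothesis that there are fewer than $n$ conjugacy classes of order-$p$ elements, no collision is forced; for instance if $n=p$ and $\sigma$ is a single $p$-cycle you have exactly one data point. Even when a collision does occur, if it is between two cycle products rather than two fixed-point components, your pivot $y=(1,\ldots,g,\ldots,g^{-1},\ldots,1)(i\,j)$ is not the right object, since swapping two $p$-cycles is not a transposition of coordinates. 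Your trichotomy is also incomplete: an element such as $x=(1,\ldots,1)\sigma$ with $\sigma$ an involution has even order yet every cycle product is trivial, so it lands in none of your cases. Finally, the ``free coordinate'' heuristic fails at $n=2$, since the pivot $(g,g^{-1})(1\,2)$ occupies both coordinates. The Brauer--Fowler route sidesteps all of this bookkeeping.
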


We do not know how sharp the bound is in Theorem \ref{pWreathProduct}. Computer calculations on small groups reveal that $\cg{G}$ for $G=S_3\Wr S_2$, $S_3\Wr S_3$, $S_4\Wr S_2$, $A_4\Wr S_3$ and $D_{10}\Wr S_3$ all have diameter 4 while $\cg{G}$ for $G=D_{18}\Wr S_2$ and $A_5\Wr S_2$ have diameter 5.

\begin{theorem}
\label{tCentralProduct}
Suppose a non-abelian finite group $G$ is the central product of two of its subgroups $H$ and $K$. If $H$ and $K$ are both non-abelian then
$$\diam{\cg{G}} \leq \min\{3,\diam{\cg{H}}, \diam{\cg{K}}\}.$$ Otherwise, exactly one of $H$ and $K$, say $K$, is abelian and then $$\diam{\cg{G}} = \diam{\cg{H}}.$$
\end{theorem}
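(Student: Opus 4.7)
The plan is to exploit the central product structure: every $g \in G$ factors as $g = hk$ with $h \in H$ and $k \in K$, and $[H,K]=1$. A direct check gives $\gcentre{G} = \gcentre{H}\gcentre{K}$, and $g = hk$ lies in $\gcentre{G}$ exactly when $h \in \gcentre{H}$ and $k \in \gcentre{K}$; this is well-defined because any two factorisations of $g$ differ by an element of $H \cap K \leq \gcentre{H} \cap \gcentre{K}$. Two consequences drive the argument: elements of $H \setminus \gcentre{H}$ are non-central in $G$, and for any $h \in H$ and $g = h'k' \in G$ we have $[h,g]=1$ exactly when $[h,h']=1$, so commuting paths in $\cg{H}$ lift directly to walks in $\cg{G}$.

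Suppose both $H$ and $K$ are non-abelian and take $g_1 = h_1 k_1, g_2 = h_2 k_2 \in G \setminus \gcentre{G}$. For the bound $\diam{\cg{G}} \leq 3$, pick $x \in H \setminus \gcentre{H}$ commuting with $h_1$ (take $x = h_1$ if $h_1 \notin \gcentre{H}$; otherwise any non-central element of $H$ works since $h_1$ is central), and symmetrically $y \in K \setminus \gcentre{K}$ commuting with $k_2$. Then $g_1, x, y, g_2$ is a walk of length at most $3$ in $\cg{G}$.

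For $\diam{\cg{G}} \leq \diam{\cg{H}}$ (and by symmetry $\leq \diam{\cg{K}}$), if $h_1, h_2 \notin \gcentre{H}$ then a shortest path $h_1 = a_0, a_1, \ldots, a_d = h_2$ in $\cg{H}$ lifts to the walk $g_1, a_1, \ldots, a_{d-1}, g_2$ in $\cg{G}$ of length $d \leq \diam{\cg{H}}$. If, say, $h_1 \in \gcentre{H}$, then $g_1$ commutes with every non-central element of $H$; choosing any such $x$ that commutes with $h_2$ (or any non-central $x \in H$ if also $h_2 \in \gcentre{H}$) gives a walk $g_1, x, g_2$ of length at most $2$, which is at most $\diam{\cg{H}}$ since the commuting graph of a non-abelian group has diameter at least $2$.

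Finally, suppose $K$ is abelian, so $\gcentre{G} = \gcentre{H}K$ and every non-central $g = hk$ has $h \notin \gcentre{H}$; the lifting above immediately gives $\diam{\cg{G}} \leq \diam{\cg{H}}$ with no case analysis required. For the reverse inequality, take non-central $h_1, h_2 \in H$ and a shortest $\cg{G}$-path $h_1 = a_0, \ldots, a_d = h_2$, writing each $a_i = x_i y_i$. Abelianness of $K$ collapses the commutation $a_i a_{i+1} = a_{i+1} a_i$ to $x_i x_{i+1} = x_{i+1} x_i$, and each $x_i \notin \gcentre{H}$ because $a_i$ is non-central in $G$, so deleting consecutive duplicates produces a walk in $\cg{H}$ of length at most $d$, whence $\diam{\cg{H}} \leq \diam{\cg{G}}$. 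The main subtlety throughout is that a non-central element of $G$ need not have both factors non-central in their respective subgroups, which is precisely what forces the case analysis in the non-abelian case and disappears once $K$ is abelian.
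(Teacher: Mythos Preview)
Your approach is essentially the same as the paper's, and the abelian-$K$ case is handled cleanly. There is one genuine slip in the non-abelian case, in your argument for $\diam{\cg{G}} \leq \diam{\cg{H}}$.

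When $h_1,h_2 \notin \gcentre{H}$ you take a shortest $\cg{H}$-path $h_1=a_0,a_1,\ldots,a_d=h_2$ and claim that $g_1,a_1,\ldots,a_{d-1},g_2$ is a walk of length $d$ in $\cg{G}$. This is fine for $d\geq 2$, but fails for $d\leq 1$: if $h_1$ and $h_2$ commute (or coincide) your lifted ``walk'' is just $g_1,g_2$, and these need \emph{not} be adjacent because $k_1$ and $k_2$ need not commute in the non-abelian $K$. (Concretely, in $G=S_3\times S_3$ take $g_1=(h,k_1)$, $g_2=(h,k_2)$ with $h$ a transposition and $k_1,k_2$ non-commuting: here $d_H(h_1,h_2)=0$ but $d_G(g_1,g_2)=2$.) The fix is immediate: when $h_1$ and $h_2$ commute, use $g_1\sim h_1\sim g_2$, a walk of length $2\leq \diam{\cg{H}}$. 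The paper sidesteps this by proving $\diam{\cg{G}}\leq 3$ first and then only needing the extra argument when $\diam{\cg{H}}=2$, in which case it produces a common $\cg{H}$-neighbour $x$ of $h_1,h_2$ and uses $g_1\sim x\sim g_2$ directly; your path-lifting is a nice uniform reformulation of the same idea once the short-path edge case is patched. Note that this issue genuinely disappears in the abelian-$K$ case, exactly as you say, because there $k_1,k_2$ always commute and the lifted walk is valid for every $d\geq 1$.
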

Theorem \ref{tCentralProduct} leads to the following corollary; that if the conjecture of Jafarzadeh and Iranmanesh is true for $p$-groups then it is true for nilpotent groups:
\begin{corollary}
\label{cNilpotentToPrime}
Let $G$ be a finite nilpotent group. Then either $\diam{\cg{G}}\leq 3$ or $G$ is a $p$-group.
\end{corollary}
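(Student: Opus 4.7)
The plan is to exploit the Sylow decomposition of a finite nilpotent group together with Theorem~\ref{tCentralProduct}. Write $G = P_1 \times P_2 \times \cdots \times P_k$ as the internal direct product of its Sylow subgroups. If $k = 1$, then $G$ is a $p$-group and the conclusion holds immediately, so I would assume $k \geq 2$ and aim to show $\diam{\cg{G}} \leq 3$.

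Any partition of the Sylow factors into two nonempty groups yields a decomposition $G = H \times K$ with $H, K$ nontrivial and of coprime orders; this is automatically a central product of $G$, since $H \cap K = \{e\} \subseteq \gcentre{G}$ and the two factors commute element-wise. Hence Theorem~\ref{tCentralProduct} applies to any such decomposition, and by its first clause I obtain $\diam{\cg{G}} \leq 3$ as soon as both $H$ and $K$ are non-abelian. The task therefore reduces to finding a suitable partition.

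If at least two of the Sylow subgroups $P_i$ are non-abelian, I would simply place one non-abelian Sylow in each of $H$ and $K$ and distribute the remaining $P_i$ arbitrarily; then both factors are non-abelian and Theorem~\ref{tCentralProduct} gives $\diam{\cg{G}} \leq 3$. The main obstacle, and the reason for the second alternative in the corollary, is the case where only one Sylow subgroup $P_j$ is non-abelian: then $G = P_j \times A$ with $A$ abelian, and the second clause of Theorem~\ref{tCentralProduct} only yields $\diam{\cg{G}} = \diam{\cg{P_j}}$, so the diameter question reduces to that of the $p$-group $P_j$ --- matching the ``$G$ is a $p$-group'' alternative in the statement.
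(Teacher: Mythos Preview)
Your overall strategy --- use the Sylow decomposition of a nilpotent group and then invoke Theorem~\ref{tCentralProduct} on a splitting $G=H\times K$ --- is exactly what the paper does (its one-line proof simply says ``apply Theorem~\ref{tCentralProduct} to the direct product of Sylow subgroups''), and your case where at least two Sylow factors are non-abelian is handled correctly.

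The gap is in your final case. You assume $k\geq 2$, so $G$ is \emph{not} a $p$-group, and then in the situation where exactly one Sylow subgroup $P_j$ is non-abelian you obtain from the second clause of Theorem~\ref{tCentralProduct} that $\diam{\cg{G}}=\diam{\cg{P_j}}$. You then assert that this ``matches the `$G$ is a $p$-group' alternative''. It does not: $G$ still has at least two distinct prime divisors, so it is not a $p$-group, and nothing you have shown forces $\diam{\cg{P_j}}\leq 3$. Thus in this branch neither disjunct of the corollary has been established. Concretely, if $P$ is any $p$-group with $\diam{\cg{P}}>3$ (or with $\cg{P}$ disconnected, e.g.\ an extraspecial group of order $p^3$, by Theorem~\ref{tPrimeCentre}) and $q\neq p$ is a prime, then $G=P\times C_q$ is nilpotent, is not a $p$-group, and has $\diam{\cg{G}}=\diam{\cg{P}}\not\leq 3$; your argument gives no way around this.

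What your argument actually proves --- and what the paper's terse proof also really yields --- is the slightly weaker (but, for the purposes announced before the corollary, equally useful) statement: for a finite nilpotent $G$, either $\diam{\cg{G}}\leq 3$ or $\diam{\cg{G}}=\diam{\cg{P}}$ for some Sylow subgroup $P$ of $G$. This is the precise sense in which the nilpotent case reduces to the $p$-group case. If you wish to keep the corollary exactly as stated, you would need an additional input bounding $\diam{\cg{P_j}}$ by~$3$, and no such bound is available from the results of the paper.
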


The following results depend on comparing the size of the centre with the size of the whole group. The derived subgroup of a group $G$ is denoted by $G'$ and is the subgroup generated by all commutators $[g,h]=g^{-1}h^{-1}gh$ of the elements $g,h\in G$. This group $G'$ is the smallest normal subgroup $N$ of $G$ such that $G/N$ is abelian.
\begin{theorem}
\label{tSmallCentre}
If $G' \leq \gcentre{G}$ and $|\gcentre{G}| ^ 3 < |G|$ then $\diam{\cg{G}} = 2$.
\end{theorem}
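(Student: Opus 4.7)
The plan is to exploit the hypothesis $G' \leq \gcentre{G}$ (so $G$ is nilpotent of class at most $2$) to obtain a strong lower bound on centralisers, and then use a standard product-formula counting argument to locate a non-central common neighbour for any two non-central elements.

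First, I would observe that for each $x \in G$ the map $\phi_x \colon G \to \gcentre{G}$ defined by $y \mapsto [x,y]$ is a group homomorphism; this uses the commutator identity $[x,yz] = [x,z]\,[x,y]^z$ together with the fact that $[x,y] \in G' \leq \gcentre{G}$, which makes $[x,y]^z = [x,y]$. Since $\ker \phi_x = \cent{G}{x}$, the first isomorphism theorem gives an injection $G/\cent{G}{x} \hookrightarrow \gcentre{G}$ and hence
\[
|\cent{G}{x}| \;\geq\; \frac{|G|}{|\gcentre{G}|} \quad\text{for every } x \in G.
\]

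Next, I would pick any two non-central elements $x, y$ that do not commute (such a pair exists since $|\gcentre{G}|^3 < |G|$ forces $G$ to be non-abelian, and if every pair of non-central elements commuted then $G$ would be abelian). Using $|\cent{G}{x}\cent{G}{y}| \leq |G|$ and the product formula
\[
|\cent{G}{x}\cent{G}{y}| \;=\; \frac{|\cent{G}{x}|\,|\cent{G}{y}|}{|\cent{G}{x}\cap \cent{G}{y}|},
\]
I can combine this with the centraliser bound above to obtain
\[
|\cent{G}{x}\cap \cent{G}{y}| \;\geq\; \frac{|G|}{|\gcentre{G}|^{\,2}} \;>\; |\gcentre{G}|,
\]
where the strict inequality is exactly the hypothesis $|\gcentre{G}|^3 < |G|$. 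Since $\gcentre{G} \subseteq \cent{G}{x}\cap \cent{G}{y}$, there must be a non-central element $z$ in the intersection. Because $x$ and $y$ do not commute, $z$ is automatically distinct from both $x$ and $y$, so $x - z - y$ is a path of length $2$ in $\cg{G}$. This shows $\diam{\cg{G}} \leq 2$, and equality follows from having exhibited a non-commuting pair of non-central elements.

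There is no substantial obstacle; the argument is essentially one clean counting step. The only delicate points to check carefully are that $\phi_x$ really is a homomorphism under the class-$2$ hypothesis (so that $|\cent{G}{x}|$ divides into $|G|$ with quotient dividing $|\gcentre{G}|$), and that a common neighbour $z$ can be chosen distinct from $x$ and $y$ — both of which follow directly from the setup.
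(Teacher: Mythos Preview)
Your proof is correct and follows essentially the same approach as the paper: both use the commutator homomorphism $\phi_x(y)=[x,y]$ (valid because $G'\leq\gcentre{G}$) to get $|\cent{G}{x}|\geq |G|/|\gcentre{G}|$, and then a short counting argument to force $|\cent{G}{x}\cap\cent{G}{y}|>|\gcentre{G}|$. The only minor difference is in that last step: you invoke the product formula $|\cent{G}{x}\cent{G}{y}|=|\cent{G}{x}||\cent{G}{y}|/|\cent{G}{x}\cap\cent{G}{y}|\leq |G|$, whereas the paper instead restricts $\phi_y$ to $\cent{G}{x}$ and applies the First Isomorphism Theorem a second time to obtain $|\cent{G}{x}|/|\cent{G}{x}\cap\cent{G}{y}|\leq |\gcentre{G}|$ directly.
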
 

Theorem \ref{tSmallCentre} covers groups with nilpotency class $2$ and sufficiently small centres, in particular it includes all extraspecial $p$-groups except those with order $p^3$. These remaining extraspecial $p$-groups are dealt with by the next result, where we look at groups with relatively large centres.
\begin{theorem}
\label{tPrimeCentre}
If the index of the centre of a group $G$ is finite and a product of at most $3$ primes, not necessarily distinct, then the commuting graph of $G$ is disconnected. 
\end{theorem}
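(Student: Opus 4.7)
The plan is a case analysis on the structure of $H := G/\gcentre{G}$. The result is trivial when $G$ is abelian, and if $|H|$ is prime then $H$ is cyclic, so $G$ is forced to be abelian.

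When $|H|$ is a product of exactly two primes, non-abelianness of $G$ pins down $H$: either $H \cong \mathbb{Z}_p \times \mathbb{Z}_p$ when $|H|=p^2$, or $H$ is the unique non-abelian group of order $pq$ when $|H|=pq$ with $p \neq q$. In either case, for any non-central $x$, the subgroup $C_G(x)/\gcentre{G}$ of $H$ is proper, non-trivial, and contains $\langle x\gcentre{G}\rangle$; the subgroup structure of $H$ then forces $C_G(x)/\gcentre{G} = \langle x\gcentre{G}\rangle$, which is an order-$p$ subgroup in the first case and a Sylow subgroup of $H$ in the second. Two non-central elements commute if and only if they generate the same such subgroup, so $G \setminus \gcentre{G}$ is partitioned into at least three disjoint components.

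For $|H|$ a product of three primes ($p^3$, $p^2q$, or $pqr$), the key lemma I would use is: if $G$ admits an abelian subgroup $A$ of prime index containing $\gcentre{G}$, then $A$ is a maximal subgroup of $G$, and $C_G(A) = A$ (otherwise $C_G(A) = G$, forcing $A \subseteq \gcentre{G}$, which contradicts $[G:A]$ being a single prime). Consequently $C_G(a) = A$ for every $a \in A \setminus \gcentre{G}$, so the connected component of $a$ in $\cg{G}$ is exactly $A \setminus \gcentre{G}$, and any non-central element of $G \setminus A$ lies in a different component. The task thus reduces to exhibiting such an $A$ for each admissible isomorphism type of $H$, typically as the preimage of an abelian maximal subgroup of $H$ whose preimage in $G$ remains abelian. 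In the remaining cases --- most delicately when $G$ has nilpotency class $2$ with $H \cong \mathbb{Z}_p \times \mathbb{Z}_p \times \mathbb{Z}_p$ and the commutator-induced non-degenerate alternating bilinear form $\phi : H \times H \to G'$ is so anisotropic that $\ker \phi(v,-) = \langle v\rangle$ for every non-zero $v \in H$ --- the same proper-subgroup argument yields $C_G(x)/\gcentre{G} = \langle x\gcentre{G}\rangle$ for every non-central $x$, and the components of $\cg{G}$ correspond to the $(|H|-1)/(p-1)$ one-dimensional subspaces of $H$.

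The principal obstacle is a thorough enumeration of the isomorphism types of $H$ when $|H|$ has three prime factors, verifying that one of the two strategies applies to each type. The anisotropic class-$2$ cases are the most delicate, since the abelian-subgroup-of-prime-index approach fails and one must read off disconnectedness directly from the smallness of each individual centraliser via the commutator form.
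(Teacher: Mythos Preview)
Your approach is quite different from the paper's, and as you yourself note, the three-prime enumeration is left open. The paper gives a short uniform argument with no case analysis on $H=G/\gcentre{G}$ at all. Its engine is Lemma~\ref{lTwoCosets}: if $x\sim y$ then the set $\overline{\langle x\gcentre{G},y\gcentre{G}\rangle}$ of non-central elements lying over this subgroup of $H$ is a clique in $\cg{G}$. Assuming $\cg{G}$ is connected, take $A=\langle a\gcentre{G}\rangle$ of prime order; the clique $\overline{A}$ has an edge to some $b$ outside it, and since $A$ has prime order the lemma forces $a\sim b$, producing a strictly larger clique $\overline{B}$ with $B=\langle a\gcentre{G},b\gcentre{G}\rangle$. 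If $B=H$ then $a$ is central, a contradiction; otherwise $B$ has prime index in $H$, and one more step yields $y\in\overline{B}$ adjacent to some $c$ with $\langle B,c\gcentre{G}\rangle=H$. Now $y$ commutes with each of $a,b,c$, hence with every word in them, hence with all of $G$, contradicting $y\notin\gcentre{G}$. The bound of three prime factors is exactly what caps the process at two growth steps.

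Your two-prime argument is correct, and your prime-index abelian-subgroup lemma is a clean tool. But in the three-prime case the dichotomy you propose is not obviously exhaustive, and the unexamined territory is not confined to the class-$2$ anisotropic situation you single out. When $H$ is non-abelian (which genuinely occurs: any centerless group of order $p^2q$ or $pqr$ is its own $G/\gcentre{G}$, and non-abelian groups of order $p^3$ can also arise as inner automorphism groups), the commutator does not factor through a bilinear form on $H$, so your strategy-2 reasoning does not apply as written; and an abelian maximal subgroup of $H$ need not pull back to an abelian subgroup of $G$, so strategy~1 is not automatic either. Verifying that one of your two strategies always succeeds would be essentially the whole proof, and the paper's clique-growing argument bypasses the question entirely.
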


Theorem \ref{tPrimeCentre} allows us to show that many small groups have disconnected commuting graphs. For example, since $p$-groups have non-trivial centres, any $p$-group with order at most $p^4$ has disconnected commuting graph. By Theorem \ref{tSmallCentre}, extraspecial $p$-groups of order $p^5$ have commuting graph with diameter $2$ and so  the condition of up to $3$ primes in Theorem \ref{tSmallCentre} is tight.

In Section \ref{sMotivatingExamples}, we provide some illuminating examples to motivate future work on the Iranmanesh and Jafarzadeh conjecture.  For all primes $p$ and integers $n \geq 2$, the group of lower unitriangular matrices of size $n$ over $\Z_p$, denoted $\ULT(n,p)$, is a $p$-group with nilpotency class $n-1$ \cite[Theorem 3.2.3]{weinstein}. Theorem \ref{tSmallCentre} deals with groups with nilpotency class $2$, in contrast we will see in Proposition \ref{pUnitriangular} that $\ULT(n,p)$ is a family of groups with arbitrarily large nilpotency class but fixed commuting graph diameter $3$.

The largest known theoretical result of the diameter of a commuting graph is $5$ for $S_n$ when $n$ and $n-1$ are both not prime \cite{woodcock2010}. Groups with commuting graph with a diameter of $6$ were found in a computer search in \cite{giudici2010}, but this offers little insight into why such a high diameter holds, and what properties of the group are important. Here we will build an infinite family of groups as a semidirect product of $\Z_p^2$ with $\SL(2,3)$, for certain primes $p$, that have connected commuting graphs with diameter $6$. This is the first infinite family of groups with commuting graph diameter $6$ known to the authors.. The family of groups we construct all have trivial centre, and the proof gives us some idea of the relevant properties of the group.

\section{The diameter of the commuting graph of some group products}
For all $n \geq 2$ the induced subgraph of $\cg{A_5 \Wr S_n}$ whose vertices are all the elements of a particular conjugacy class of involutions  has diameter $n$ \cite{bates2004}. Here we prove Theorem \ref{pWreathProduct}. Since $A_5$ has one conjugacy class of elements of order $3$ and two classes of elements of order $5$ this implies that $\cg{A_5 \Wr S_n}$ for $n\geq 3$ has diameter at most $7$. We are not sure how sharp this bound is. A computer calculation shows that the diameter of $\cg{A_5\Wr S_2}$ is 5. 

We start with a lemma connecting elements of prime order to involutions in the commuting graph.
\begin{lemma}
\label{lPrimeOrderInvolutions}
Let $A \Wr S_n$ satisfy the conditions of Proposition \ref{pWreathProduct}, then in this group every element of prime order commutes with some involution.
\end{lemma}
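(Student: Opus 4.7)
The plan is to write an element $g$ of prime order $p$ in $G = A \Wr S_n$ as $g = (f; \sigma)$ with $f \in A^n$ and $\sigma \in S_n$, and then split on $p$ and on the cycle structure of $\sigma$. If $p = 2$ the element is itself an involution and there is nothing to prove, so I focus on odd $p$. Then $g^p = 1$ forces $\sigma^p = 1$, so $\sigma$ is either trivial or has order $p$.

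First I would treat $\sigma = 1$. Here every $f(i)$ has order $1$ or $p$, and $f$ is nontrivial. If some $f(i_0) = 1$, placing an involution of $A$ (which exists since $|A|$ is even) in coordinate $i_0$ and $1$ elsewhere yields an element $(h; 1)$ that is easily checked to be an involution commuting with $g$. Otherwise every $f(i)$ has order $p$, and the hypothesis that the number of conjugacy classes of order-$p$ elements in $A$ is strictly less than $n$ is exactly what is needed for the pigeonhole principle to provide indices $i \neq j$ and an $x \in A$ with $f(i) = x f(j) x^{-1}$. Setting $h(i) = x$, $h(j) = x^{-1}$, and $h(k) = 1$ for $k \notin \{i, j\}$, a short direct computation shows that $(h; (i\,j))$ is an involution commuting with $g$.

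For $\sigma$ of order $p$, the condition $g^p = 1$ translates into the statement that on each $p$-cycle $(i_1, \ldots, i_p)$ of $\sigma$ the cyclic product $f(i_1) f(i_p) f(i_{p-1}) \cdots f(i_2)$ equals $1_A$. A standard telescoping conjugation in the base group $A^n$ (solving $u(i_{j-1}) = u(i_j) f(i_j)$ around one chosen cycle, and taking $u \equiv 1$ elsewhere) then replaces $g$ by a conjugate $g' = (f_0; \sigma)$ with $f_0$ trivial on that cycle. Placing an involution $c \in A$ in every slot of that cycle and $1$ elsewhere produces an $h \in A^n$ such that $(h; 1)$ is an involution and, by inspection, commutes with $g'$; conjugating back yields an involution commuting with $g$.

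The main obstacle is bookkeeping the wreath-product multiplication and the action of $\sigma$ on the base group (which index gets mapped where) cleanly enough that the commutation checks are transparent; the conjugation-to-normal-form step for $\sigma$ of order $p$ is the most delicate piece. The conceptual heart, and the only place the hypothesis on the number of conjugacy classes is used, is the pigeonhole step in the $\sigma = 1$ subcase in which all coordinates have order $p$.
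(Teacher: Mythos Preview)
Your proof is correct and mirrors the paper's case split exactly: $\pi$ trivial versus nontrivial, with the same pigeonhole use of the conjugacy-class hypothesis in the all-coordinates-nontrivial subcase. The only difference is packaging in the $\pi$-of-order-$p$ case, where the paper writes the commuting involution down directly as $(x, x^{a_1}, x^{a_1a_2}, \ldots, x^{a_1\cdots a_{p-1}}, 1, \ldots, 1)$ and checks $gh=hg$ by hand rather than first conjugating $g$ to have trivial base on a cycle; unwinding your conjugation produces precisely this tuple, so the two arguments are the same computation in different dress.
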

\begin{proof}
Set $G := A \Wr S_n$ and let $g \in G$ be an element with prime order $p$.  Since an element commutes with itself we may assume that  $p$ is odd. We write $g=(a_1,a_2,\ldots,a_n)\pi$ where each $a_i\in A$ and $\pi\in S_n$.  We describe an involution commuting with $g$ in the two cases $\pi = 1$ and $\pi \neq 1$.

Consider the case when $\pi=1$. Firstly, if some $a_j = 1$ then put $h := (c_1, \ldots, c_n)$ with $c_i = 1$ for $i \neq j$ and $c_j=x$ any involution $x \in A$. Then $h$ is an involution in $G$ commuting with $g$.

Alternatively, we have $a_j \neq 1$ for all $j$. As $g$ has prime order $p$ then each $a_i$ has order $p$. By the hypothesis, the $a_i$'s are contained in at most $n-1$ conjugacy classes of $A$ and thus,  there must be some $a_s$ and $a_t$ conjugate in $A$ with $s \neq t$. Choose $x \in A$ such that $a_t = a_s^x$. Without loss of generality suppose $s < t$. Let $\tau$ be the two cycle $(s~t) \in S_n$ and put $h := (c_1, \ldots, c_n)\tau$ with $c_s=x$, $c_t=x^{-1}$ and all other $c_i = 1$. Then $h$ is an involution and
$$\begin{array}{rl}
gh &=  (a_1, \ldots, a_s, \ldots, a_t, \ldots , a_n) (e_{A}, \ldots,c_s,\ldots,c_t, \ldots, e_{A})\tau  \\
&= (a_1, \ldots, a_t^{x^{-1}}x, \ldots, a_s^xx^{-1}, \ldots , a_n)\tau \\
&= (a_1, \ldots, xa_t, \ldots, x^{-1}a_s, \ldots , a_n)\tau \\
&= (e_{A_5}, \ldots,c_s, \ldots, c_t, \ldots, e_{A})\tau (a_1, \ldots, a_s, \ldots, a_t, \ldots , a_n) \\
&=hg.\end{array}$$

Now we resolve the second case, where $\pi \neq 1$. Since $e_G = g^p = (\ldots)\pi^p$ it follows that $\pi$ has order $p$.  Consider the case where $\pi$ is the single $p$-cycle $(1~2~\ldots~p)$. Then
$$1 = g^p = (a_1a_2\ldots a_p, a_2a_3\ldots a_pa_1, \ldots, a_pa_1a_2\ldots a_{p-1}, (a_{p+1})^p, \ldots, a_n^p)\pi^p$$
and in particular $a_1a_2\ldots a_p=1$. Let $x$ be an involution in $A$ and put $h := (x, x^{a_1}, x^{a_1a_2}, \ldots, x^{a_1a_2\ldots a_{p-1}}, 1, \ldots, 1)$.  Then
$$\begin{array}{rl}
gh &= (a_1, a_2, \ldots, a_n)\pi(x, x^{a_1}, x^{a_1a_2}, \ldots, x^{a_1a_2\ldots a_{p-1}}, 1, \ldots, 1) \\
&= (a_1x^{a_1}, a_2x^{a_1a_2}, \ldots,a_px, a_{p+1}, \ldots, a_n)\pi \\
&= (xa_1, x^{a_1}a_2, \ldots,a_px^{a_1a_2\ldots a_p}, a_{p+1}, \ldots, a_n)\pi \\
\multicolumn{2}{c}{\text{(as $a_1a_2\ldots a_p=1$)}} \\
&= (xa_1, x^{a_1}a_2, \ldots,x^{a_1a_2\ldots a_{p-1}}a_p, a_{p+1}, \ldots, a_n)\pi \\
&= (x, x^{a_1}, x^{a_1a_2}, \ldots, x^{a_1a_2\ldots a_p}, 1, \ldots, 1)(a_1, a_2, \ldots, a_n)\pi \\
&= hg.\end{array}$$
Also, since $x$ has order $2$ then so does $x^{a_1\ldots a_i}$ for all $i$ and so $h$ is an involution commuting with $g$.
When $\pi$ is arbitrary, we can construct an involution $h$ commuting with $g$  consisting of $k$ $p$-cycles in an analogous fashion by choosing  $h$ to have precisely $kp$ nontrivial entries and with these entries being suitable conjugates of $x$.
\end{proof}

The next two lemmas allow us to apply a result from the principal 1955 paper by Brauer and Fowler \cite{brauer} in the final proof of Proposition \ref{pWreathProduct}.
\begin{lemma}
\label{lConjugacyClassesOfInvolutions}
Let $A$ be a finite group of even order and $H \leq S_n$. For $n \geq 2$, the group $A \Wr H$ contains more than one class of involutions.
\end{lemma}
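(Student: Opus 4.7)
The plan is to exhibit two involutions in the base group $A^n$ of $A \Wr H$ whose conjugacy classes in the whole group must differ, simply by counting non-trivial coordinates. Since $A$ has even order, Cauchy's theorem provides an involution $x \in A$. Because $n \geq 2$, we may form the two elements
\[
h_1 := (x, 1, 1, \ldots, 1) \quad \text{and} \quad h_2 := (x, x, 1, \ldots, 1)
\]
in $A^n \leq A \Wr H$; both are clearly involutions.

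The key observation is that $A^n$ is a normal subgroup of $A \Wr H$, so every conjugate of $h_1$ (respectively $h_2$) lies in $A^n$. For any $g = (a_1, \ldots, a_n)\sigma \in A \Wr H$ and any $h = (b_1, \ldots, b_n) \in A^n$, a direct computation gives
\[
ghg^{-1} = \bigl(a_1 b_{\sigma^{-1}(1)} a_1^{-1},\; \ldots,\; a_n b_{\sigma^{-1}(n)} a_n^{-1}\bigr).
\]
In particular, a coordinate of $ghg^{-1}$ is trivial if and only if the corresponding coordinate of $h$ (after permutation by $\sigma^{-1}$) is trivial. Hence the number of non-trivial coordinates of a base-group element is an invariant of its $(A \Wr H)$-conjugacy class.

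Since $h_1$ has exactly one non-trivial coordinate while $h_2$ has exactly two, they cannot be $(A \Wr H)$-conjugate, and so $A \Wr H$ contains at least two distinct conjugacy classes of involutions. There is no real obstacle here; the only subtlety is verifying that the conjugation formula preserves the cardinality of the support, and noting that the hypothesis $n \geq 2$ is precisely what is needed to accommodate the second copy of $x$ in $h_2$.
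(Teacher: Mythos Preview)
Your proof is correct and follows essentially the same approach as the paper: exhibit the involutions $(x,1,\ldots,1)$ and $(x,x,1,\ldots,1)$ and observe that conjugation in $A\Wr H$ preserves the number of nontrivial coordinates of an element of the base group. Your version simply supplies more detail (the explicit conjugation formula and the appeal to Cauchy's theorem) than the paper's one-sentence argument.
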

\begin{proof}
Since conjugation by elements of $A\Wr H$ preserves the number of entries of an element of $A^n$ that are nontrivial, the elements $(x,1,\ldots,1)$ and $(x,x,1,\ldots,1)$ for an involution $x$ in $A$ are nonconjugate involutions in $A\Wr H$.
\end{proof}

\begin{lemma}
\label{lWreathProductTrivialCentre}
If $A$ is a non-trivial group with trivial centre then $A \Wr H$ also has trivial centre for any group $H \leq S_n$.
\end{lemma}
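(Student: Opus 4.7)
The plan is to show that any element $g = (a_1, \ldots, a_n)\pi$ of $\gcentre{A \Wr H}$, with $a_i \in A$ and $\pi \in H$, must equal the identity. I will proceed in two stages: first forcing $\pi = 1$ using the nontriviality of $A$, and then forcing $g = 1$ using $\gcentre{A} = 1$.

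For the first stage, suppose for contradiction that $\pi \neq 1$, so some coordinate is moved, say $\pi(i) = j \neq i$. Since $A$ is nontrivial, I can pick $x \in A$ with $x \neq 1$ and let $b \in A^n$ be the element with $x$ in position $i$ and the identity in every other position. Then I compute $gb$ and $bg$ using the action of $H$ on $A^n$ by permutation of coordinates. In $bg = (b_1 a_1, \ldots, b_n a_n)\pi$, the $i$-th coordinate is $x a_i$, whereas in $gb = g b g^{-1} \cdot g$, the $i$-th coordinate is just $a_i$ (because the nontrivial entry of $b$ is shifted out of position $i$ by $\pi$). Equating these forces $x = 1$, a contradiction. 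Hence $\pi = 1$.

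For the second stage, with $\pi = 1$ we have $g = (a_1, \ldots, a_n) \in A^n$. Since $g$ is central in $A \Wr H$, it commutes in particular with every element of the base subgroup $A^n$, so $g \in \gcentre{A^n} = \gcentre{A}^n$. By hypothesis $\gcentre{A} = 1$, which forces $g = 1$.

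The only real obstacle is careful bookkeeping with the wreath product action so that the asymmetry created by $\pi$ moving coordinate $i$ to coordinate $j$ becomes visible in a single coordinate comparison. Note that the hypothesis that $A$ is nontrivial is used solely to supply the witness $x \neq 1$ in the first stage; without it, $A = 1$ and $A \Wr H \cong H$, which may have nontrivial centre.
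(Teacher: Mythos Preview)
Your proof is correct and follows essentially the same approach as the paper's: pick a coordinate moved by $\pi$, test against a base-group element supported only at that coordinate, and observe that the support shifts under conjugation by $g$, forcing $\pi=1$; then invoke $\gcentre{A}=1$ to kill the base part. The only cosmetic difference is that the paper phrases the first stage as ``the nontrivial entry of $y^x$ lands at $j^\pi\neq j$'' whereas you compare the $i$-th coordinates of $gb$ and $bg$ directly --- these are the same observation.
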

\begin{proof}
Let $x := (a_1, a_2, \ldots, a_n)\pi$ be some element of $\gcentre{A \Wr H}$, with $a_i \in A$ and $\pi \in H \leq S_n$. Suppose that $\pi \neq e_H$. Then there is some index $j$ not fixed by $\pi$. Let $y=(y_1,\ldots,y_n)\in G^n$ such that $y_j$ is the only nontrivial entry of $y$. However, the only nontrivial entry of $y^x$ occurs in the coordinate given by $j^\pi\neq j$, and  so $\pi=1$. Since $A$ has trivial centre it follows that each $a_i$ is trivial and so $G\Wr H$ has trivial centre.
\end{proof}

Now we can put all the pieces together. We write $g \sim h$ when the group elements $g$ and $h$ commute.
\begin{proof}[Proof of Theorem \ref{pWreathProduct}]
Suppose $n \geq 2$ and set $G := A \Wr S_n$ for some $A$ satisfying the hypothesis. Let $g, h \in G \setminus \gcentre{G}$ and some $a \in \cent{G}{g}$ and $b \in \cent{G}{h}$ with $|a|$ and $|b|$ prime. By Lemma \ref{lPrimeOrderInvolutions}, there exist involutions $x, y \in G$ with $a \sim x$ and $b \sim y$. From Lemma \ref{lConjugacyClassesOfInvolutions} and Lemma \ref{lWreathProductTrivialCentre}, there are two classes of involutions in $G$ and $G$ has trivial centre. So, by a result from Brauer and Fowler \cite[Theorem 3D]{brauer}, the distance between the involutions $x$ and $y$ in $\cg{G}$ is at most $3$. Hence $\dist{g}{h} \leq 7$ in $\cg{G}$ and thus $\diam{\cg{G}} \leq 7$.
\end{proof}

An easy way to generate new groups is as a central product of smaller groups. We will prove Theorem \ref{tCentralProduct} in the remainder of this section, ruling out the taking of central products as a process to yield a counterexample to Iranmanesh and Jafarzadeh.

\begin{proof}[Proof of Theorem \ref{tCentralProduct}]
Let $G$ be a non-abelian finite group that is the central product of subgroups  $H$ and $K$, and take $g_1, g_2 \in G \setminus \gcentre{G}$. Then there exist $h_1, h_2 \in H$ and $k_1, k_2 \in K$ such that $g_1 = h_1k_1$ and $g_2=h_2k_2$. 

Consider first the case where $H$ and $K$ are both non-abelian. Since $g_1,g_2\notin \gcentre{G}$ and by observing that $\gcentre{G}=\gcentre{H}\gcentre{K}$, we must have that at least one of $h_1, k_1$ and at least one of $h_2, k_2$ are not in the centre of $H$ or $K$ appropriately, and thus not in the centre of $G$. Now without loss of generality there are 3 cases: (1) none of $h_1, h_2, k_1$ or $k_2$ are central. (2) $h_1$ is central and $h_2$ is not central (3) $h_1$ and $h_2$ are central.

Case 1: None of $h_1, h_2, k_1$ or $k_2$ are central. Then $g_1=h_1k_1 \sim h_1 \sim k_2 \sim h_2k_2=g_2$ is a path of non-central elements of length $3$ in $\cg{G}$ from $g_1$ to $g_2$. Moreover, if $\diam{\cg{H}}=2$ then there exists $x \in H \setminus \gcentre{H}$ such that $h_1 \sim x \sim h_2$. Hence $g_1 \sim x \sim g_2$ is a path of length $2$ between $g_1$ and $g_2$ in $\cg{G}$. Similarly, if $\diam{\cg{K}}=2$ we can also construct a path of length 2.

Case 2: $h_1$ is central and $h_2$ is not. Then $g_1=h_1k_1\sim h_2\sim h_2k_2=g_2$ is a path of non-central elements of length $2$.

Case 3: $h_1$ and $h_2$ are central. Then $g_1=h_1k_1 \sim h' \sim h_2k_2$ is a path of length $2$, where $h'$ is some non-central element of $H$ which exists since $H$ is non-abelian.

So in all possible cases there is a path of length at most $3$ between $g_1$ and $g_2$ in $\cg{G}$, and a path at most 2 if one of $\cg{G}$ or $\cg{K}$ has diameter 2. Therefore $\cg{G}$ is connected with diameter at most $\min\{3,\diam{\cg{H}},\diam{\cg{K}}\}$.

Consider instead the case where one of $H$ or $K$ is abelian. Without loss of generality we assume $K$ is abelian. Since $g_1$ and $g_2$ are non-central we must have $h_1, h_2 \notin \gcentre{H}$. Suppose $\cg{H}$ is connected, so there exists some path $h_1 \sim x_1 \sim \ldots \sim x_{d-1} \sim h_2$ where $x_i \in H \setminus \gcentre{H}$ with $d \leq \diam{\cg{H}}$. Then $g_1 \sim x_1 \sim \ldots \sim x_{d-1} \sim g_2$ is a path of length at most $d$ from $g_1$ to $g_2$ in $\cg{G}$. Therefore $\diam{\cg{G}} \leq \diam{\cg{H}}$. Conversely, suppose that $h_1$ and $h_2$ have distance $d$ in $\cg{H}$ and that $g_1=h_1k_1\sim a_1b_1\sim a_2b_2\sim \ldots a_\ell b_\ell \sim g_2=h_2k_2$ with $a_i\in H$ and $b_i\in K$. Then since each $a_ib_i$ is noncentral in $G$, each $a_i$ is noncentral in $H$ and also $h_1\sim a_1\sim a_2 \sim \ldots\sim a_\ell \sim h_2$ is a path in $\cg{H}$ of length $\ell$. Hence $\ell\geq d$ and so we must have $\diam{\cg{G}}=\diam{\cg{H}}$.  Similarly, when $\cg{H}$ is disconnected then $\cg{G}$ must also be disconnected.
\end{proof}

From [42, p. 26, Theorem 2.12], a finite nilpotent group is a direct product of its Sylow subgroups. So by applying Theorem \ref{tCentralProduct} we can conclude Corollary \ref{cNilpotentToPrime}.

\section{Relatively Small or Large Group Centres}
\label{sSizeOfGroupCentre}

First we prove Theorem \ref{tSmallCentre}.

\begin{proof}[Proof of Theorem \ref{tSmallCentre}]
Suppose $a$ and $b$ do not commute for some $a, b \in G \setminus \gcentre{G}$. Set $X := \cent{G}{a} \cap \cent{G}{b}$. If $|X| > |\gcentre{G}|$ then there exists some $x \in X \setminus \gcentre{G}$ and so $a \sim x \sim b$ is a path of length $2$ from $a$ to $b$ in $\cg{G}$. Thus we are just required to show that $|X| > |\gcentre{G}|$.

For each $g \in G$, define the map $\phi_g : G \rightarrow G'$ by $\phi_g(y) = [g,y]$. For any $h, k \in G$ we have $\phi_g(hk)  = g^{-1}k^{-1}h^{-1}ghk = g^{-1}k^{-1}gg^{-1}h^{-1}ghk=g^{-1}k^{-1}g[g,h]k$. Now $[g,h] \in G' \leq \gcentre{G}$ by the hypothesis, so $\phi_g(hk)=[g,h][g,k]=\phi_g(h)\phi_g(k)$. Therefore $\phi_g$ is a homomorphism with $\ker(\phi_g) =\{ y \in G \mid [g,y] = 1 \} = \cent{G}{g}$. By the First Isomorphism Theorem, $G / \ker(\phi_g) \cong \phi_g(G) \leq G' \leq \gcentre{G}$. Hence $|G| / |\cent{G}{g}| \leq |G'| \leq |\gcentre{G}|$. Rearranging gives $|G| / |\gcentre{G}| \leq |\cent{G}{g}|$ for any $g \in G$. As $|\gcentre{G}|^3 < |G|$, it follows that $|\gcentre{G}|^2 < |\cent{G}{g}|$.

Let $\phi_b'$ be the restriction of $\phi_b$ to $\cent{G}{a}$. Then $\ker(\phi_b') = \cent{G}{a} \cap \ker(\phi_b) = X$. By the First Isomorphism Theorem, $\cent{G}{a} / X  \cong \phi_b'(\cent{G}{a}) \leq G' \leq \gcentre{G}$. Taking cardinalities and rearranging $|X| \geq |\cent{G}{a}| / |\gcentre{G}|$. We showed in the previous paragraph that $|\gcentre{G}|^2 < |\cent{G}{a}|$ and so $|X| > |\gcentre{G}|^2 / |\gcentre{G}| = |\gcentre{G}|$. The result follows.
\end{proof}

Now we consider cosets of $\gcentre{G}$ to prove Theorem \ref{tPrimeCentre}. As observed in \cite{vahidi}, it is clear that if $x\sim y$ in $G$ then $g \sim h$ for all $g \in x\gcentre{G}$ and $h \in y\gcentre{G}$. We introduce the following notation. For any subgroup $H$ of the quotient group $G/ \gcentre{G}$, we use $\overline{H}$ to denote all  the non-central elements of $G$ contained in elements of $H$. That is,
$$\overline{H} := \bigcup_{h\gcentre{G} \in H\setminus \{\gcentre{G}\}} h\gcentre{G},$$
so $\overline{H} \subseteq G\setminus \gcentre{G}$. A \emph{clique} in a graph is a subset of vertices all pairwise adjacent. Now we can find large cliques in the commuting graphs of groups with non-trivial centres.

\begin{lemma}
\label{lTwoCosets}
For all $x, y \in G \setminus \gcentre{G}$, if $x \sim y$ then $\overline{\langle x \gcentre{G},  y \gcentre{G}\rangle}$ is a clique in $\cg{G}$. In particular, $\overline{\langle x \gcentre{G}\rangle}$ is a clique in $\cg{G}$.
\end{lemma}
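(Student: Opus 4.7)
The plan is to exploit the fact that if $x$ and $y$ commute, then the subgroup they generate, together with the centre, is abelian, and then observe that $\overline{\langle x\gcentre{G}, y\gcentre{G}\rangle}$ sits inside this abelian subgroup.

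First I would note that since $xy=yx$, the subgroup $\langle x, y\rangle$ of $G$ is abelian. Because $\gcentre{G}$ commutes with every element of $G$, the product $\langle x, y\rangle \gcentre{G}$ is also an abelian subgroup of $G$.

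Next I would unpack the notation. Under the natural projection $\pi : G \to G/\gcentre{G}$, the preimage of the subgroup $\langle x\gcentre{G}, y\gcentre{G}\rangle$ is precisely $\langle x, y\rangle\gcentre{G}$. By definition, $\overline{\langle x\gcentre{G}, y\gcentre{G}\rangle}$ consists of exactly those elements of $\langle x, y\rangle\gcentre{G}$ which are not central in $G$. In particular it is a subset of an abelian subgroup, so any two of its elements commute in $G$, which means any two distinct elements are adjacent in $\cg{G}$. This gives the required clique.

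For the ``in particular'' statement, I would simply take $y = x$: we have $x \sim x$ trivially (as $x$ commutes with itself) and $x \in G \setminus \gcentre{G}$ by hypothesis, so the first part applied to the pair $(x,x)$ yields that $\overline{\langle x\gcentre{G}\rangle}$ is a clique. Alternatively, the direct argument is the same, since $\langle x\rangle\gcentre{G}$ is abelian as a product of two commuting abelian subgroups.

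There is no real obstacle here; the only subtlety worth flagging is being careful that $\overline{H}$ excludes the identity coset, so that every vertex in the claimed clique is indeed a vertex of $\cg{G}$ (i.e.\ non-central), and that a clique only requires \emph{distinct} vertices to be adjacent, which is automatic once they lie in a common abelian subgroup of $G$.
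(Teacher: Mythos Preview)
Your proof is correct and follows essentially the same approach as the paper's. The paper writes the argument slightly more concretely by noting that $\langle x\gcentre{G}, y\gcentre{G}\rangle = \{x^iy^j\gcentre{G} \mid i,j \in \mathbb{Z}\}$ and that $x^iy^j \sim x^ky^\ell$ for all $i,j,k,\ell$, but this is exactly your observation that $\langle x,y\rangle\gcentre{G}$ is abelian and equals the preimage of $\langle x\gcentre{G}, y\gcentre{G}\rangle$, just phrased at the level of elements rather than subgroups.
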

\begin{proof}
Since $x \sim y$, we have $\langle x \gcentre{G},  y \gcentre{G}\rangle = \{ x^iy^j \gcentre{G} | i, j \in \mathbb{Z}\}$. Moreover $x^iy^j \sim x^ky^\ell$ for all $i, j, k, \ell \in \mathbb{Z}$ and so any two elements of $G$ contained in an element of $\langle x \gcentre{G},  y \gcentre{G}\rangle$ commute and the result follows. 
\end{proof}

\begin{proof}[Proof of Theorem \ref{tPrimeCentre}]
Suppose $|G : \gcentre{G}|$ is a product of at most three primes, not necessarily distinct.  The index of $\gcentre{G}$ in $G$ can never be prime (for example, see \cite[Theorem 9.3]{gallian}) and so $|G:\gcentre{G}|$ is divisible by at least two primes~$p,q$.

Assume that $\cg{G}$ is connected. By Cauchy's Theorem, there exists some $a \in G \setminus \gcentre{G}$ such that $|a\gcentre{G}|=p$. Set $A := \langle a\gcentre{G} \rangle$. Then by Lemma \ref{lTwoCosets}, $\overline{A}$ is a clique of $\cg{G}$. Since $\cg{G}$ is connected, there exists some $x \in \overline{A}$ and $b \in G \setminus \left(\overline{A} \cup \gcentre{G}\right)$ such that $x \sim b$. Since $|a\gcentre{G}|$ is prime and $x\gcentre{G} \in A \setminus \{\gcentre{G}\}$ it follows that $\langle x \gcentre{G}\rangle = \langle a \gcentre{G} \rangle$. By Lemma \ref{lTwoCosets}, $b \sim x$ implies $b \sim a$ and  $\overline{B}$ is a clique, where $B := \langle a \gcentre{G}, b \gcentre{G} \rangle$.

Since  $b\gcentre{G} \notin A$ we have $|A| < |B|$.  If $B=G/\gcentre{G}$ then  $a \sim g$ for all $g \in G\backslash \gcentre{G}$, contradicting  $a \notin \gcentre{G}$. Thus by Lagrange's Theorem we may assume that $|B|=pq$ and $|G:\gcentre{G}|=pqr$ for some prime $r$.

Now, since $\cg{G}$ is connected, there exists some $y \in \overline{B}$ and $c \in G \setminus\left(\overline{B} \cup \gcentre{G} \right)$ such that $y \sim c$. Let $C := \langle a \gcentre{G}, b \gcentre{G}, c \gcentre{G} \rangle$. Then $B < C$ and since $B$ has prime index in $G/\gcentre{G}$ it follows that $C=G/\gcentre{G}$. Therefore $\overline{C}$ contains all of the elements of $\cg{G}$. Since $y \in \overline{B}$ and $\overline{B}$ is a clique of $\cg{G}$, it follows that $y$ commutes with $a$ and $b$. As $y$ also commutes with $c$, we have that $y$ commutes with every coset leader in $C$ and thus by Lemma \ref{lTwoCosets} commutes with every element of those cosets. So $y \sim g$ for all $g \in \overline{C}$ and hence $y \in \gcentre{G}$. This is a contradiction and so the commuting graph of $G$ must be disconnected.
\end{proof}

\section{Illustrative Examples}
\label{sMotivatingExamples}
Theorem \ref{tSmallCentre}  gave bounds on the diameters of the commuting graphs of certain groups with nilpotency class $2$. Our first example here shows that an increase in nilpotency class does not necessarily result in an increase in commuting graph diameter.

We construct an example that shows for all primes $p$ and integers $c \geq 3$, there exists a $p$-group with nilpotency class $c$ and commuting graph diameter $3$. Denote the group of $n\times n$ invertible matrices over the field $\Z_p$ of integers modulo $p$ by $\GL(n,p)$ and denote its subgroup consisting of all lower unitriangular matrices  by $\ULT(n,p)$, that is, all lower triangular matrices whose entries along the main diagonal are all $1$.   Then $|\ULT(n,p)|=p^{n(n-1)/2}$. 

Denote the $r\times r$ identity matrix by $I_r$, the $r\times s$ zero matrix by $0_{r,s}$, and the matrix with $1$ in entry $i,j$ and $0$ elsewhere by $E_{i,j}$. Weinstein shows that the centre of this group is given by $\gcentre{\ULT(n,p)} = \{ I_n + aE_{n,1} | a \in \Z_p \}$ \cite[Theorem 3.2.2]{weinstein} and that $\ULT(n,p)$ has nilpotency class $n-1$ \cite[Theorem 3.2.3]{weinstein}. 

\begin{proposition}
\label{pUnitriangular}
For a prime $p$, the group $\ULT(3,p)$ has disconnected commuting graph while for $n\geq 4$ the commuting graph of $\ULT(n,p)$ has diameter $3$. 
\end{proposition}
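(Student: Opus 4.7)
My plan is to handle $n = 3$ and $n \geq 4$ separately. For $n = 3$, observe that $|\ULT(3, p)| = p^3$ and $|\gcentre{\ULT(3, p)}| = p$, so the index of the centre is $p^2$, a product of two primes; Theorem~\ref{tPrimeCentre} then immediately yields that $\cg{\ULT(3, p)}$ is disconnected.

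Now assume $n \geq 4$. For the upper bound I would work with the subset
\[ A := \{I + aE_{n-1, 1} + bE_{n, 1} + cE_{n, 2} : a, b, c \in \Z_p\}. \]
The first task is to verify that $A$ is an abelian subgroup of $\ULT(n, p)$ of order $p^3$ containing $\gcentre{\ULT(n, p)}$; among the pairwise products of the three basis matrices the only one that could be troublesome is $E_{n, 2}E_{n-1, 1} = \delta_{2, n-1}E_{n, 1}$, which vanishes precisely because $n \geq 4$. Hence $A \setminus \gcentre{\ULT(n, p)}$ is a clique in $\cg{\ULT(n, p)}$. A direct matrix computation then reduces the condition for an arbitrary $I + M \in \ULT(n, p)$ to commute with an element of $A$ to the single linear equation $aM_{n, n-1} = cM_{2, 1}$ in $(a, c) \in \Z_p^2$, which always admits a nontrivial solution. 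Thus every non-central element of $\ULT(n, p)$ is adjacent to some element of $A \setminus \gcentre{\ULT(n, p)}$, yielding a path of length at most $3$ between any two non-central elements and so $\diam{\cg{\ULT(n, p)}} \leq 3$.

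For the lower bound I would exhibit a specific pair at distance exactly $3$: set $x = I + E_{2, 1}$ and $y = I + \sum_{k=2}^n E_{k, k-1}$. Since $y - I$ is a regular nilpotent matrix, its centraliser in the full matrix algebra is the polynomial algebra $\Z_p[y - I]$, and intersecting with the strictly lower triangular matrices gives $\cent{\ULT(n, p)}{y} = \{I + \sum_{k=1}^{n-1} c_k (y - I)^k\}$. Now $(y - I)^k$ has $1$'s on its $k$-th subdiagonal, so for $N = \sum c_k (y - I)^k$ one has $N_{i, 2} = c_{i - 2}$ whenever $3 \leq i \leq n$. The requirement $I + N \in \cent{\ULT(n, p)}{x}$ amounts to $N_{i, 2} = 0$ for all $i \geq 3$, which forces $c_1 = \cdots = c_{n-2} = 0$ and leaves only $N = c_{n-1}E_{n, 1} \in \gcentre{\ULT(n, p)}$. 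Hence $x$ and $y$ share no non-central neighbour, while a one-line computation shows they do not commute, giving $\dist{x}{y} \geq 3$. The main subtlety in the argument is the description of $\cent{\ULT(n, p)}{y}$ as a polynomial algebra, which rests on the classical linear-algebra fact that the commutant of a regular nilpotent in $M_n(\Z_p)$ is generated by its powers; the rest is careful but routine matrix bookkeeping that uses $n \geq 4$ at exactly one place.
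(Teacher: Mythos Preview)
Your proposal is correct and follows essentially the same route as the paper. For $n=3$ you both invoke Theorem~\ref{tPrimeCentre}; for the upper bound you both use (up to including the central coordinate $bE_{n,1}$) the abelian set spanned by $E_{n-1,1}$ and $E_{n,2}$ and reduce adjacency to the single equation $aM_{n,n-1}=cM_{2,1}$; and for the lower bound you choose the same pair $I+E_{2,1}$ and $I+\sum_{k}E_{k,k-1}$, the only difference being that you describe the centraliser of the regular nilpotent via the polynomial-algebra fact while the paper cites \cite[Lemma~2.4]{giudici2010} for the equivalent ``constant along subdiagonals'' description.
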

\begin{proof}
When $n=3$ we have $|\ULT(n,p) : \gcentre{\ULT(n,p)}|=p^2$ and so Theorem \ref{tPrimeCentre} tells us that $\cg{\ULT(n,p)}$ is disconnected. Suppose instead that $n\geq 4$.

Consider the subset 
$$\mathcal{X} = \left\{\begin{array}{c|c}I_n+xE_{n-1,1}+yE_{n,2} & x,y \in \Z_p \text{ and at least one of }x, y \neq 0 \end{array}\right\}$$
of $\ULT(n,p)$. This set is nonempty, does not intersect $\gcentre{\ULT(n,p)}$ and, as $n \geq 4$, its elements pairwise commute.

Take some arbitrary 
$$A=\begin{bmatrix}
1 & 0 & 0 & \ldots &  0 \\
a_{2,1} & 1 & 0 & \ldots & 0 \\
a_{3,1} & a_{3,2} & 1 &  \ldots &  0 \\
\vdots & \vdots & \ddots & \ddots & \vdots \\
a_{n, 1} & a_{n,2} & a_{n,3}&\ldots \,\, a_{n,n-1} & 1
\end{bmatrix} \in \ULT(n,p) \setminus \gcentre{\ULT(n,p)}$$
and a variable $X=I_n+xE_{n-1,1}+yE_{n,1}\in \mathcal{X} \subseteq \ULT(n,p) \setminus \gcentre{\ULT(n,p)}.$ We can find values for $x$ and $y$ so that $A \sim X$.
As $n \geq 4$ we have
$$AX = \begin{bmatrix}
1&0&\ldots&0&0 \\
a_{2,1}&1&\ldots&0&0\\
\vdots & \vdots & \ddots & \vdots & \vdots \\
a_{n-1,1}+x & a_{n-1,2} & \ldots & 1 & 0 \\
a_{n,1} + x a_{n,n-1} & a_{n,2} + y & \ldots & a_{n,n-1} & 1
\end{bmatrix}$$
and
$$XA = \begin{bmatrix}
1&0&\ldots&0&0 \\
a_{2,1}&1&\ldots&0&0\\
\vdots & \vdots & \ddots & \vdots & \vdots \\
a_{n-1,1}+x & a_{n-1,2} & \ldots & 1 & 0 \\
a_{n,1} + y a_{2,1} & a_{n,2} + y & \ldots & a_{n,n-1} & 1
\end{bmatrix}.$$
If $a_{2,1} \neq 0$ then set $x := 1, y := \frac{a_{n,n-1}}{a_{2,1}}$ and it follows that $AX=XA$. Otherwise $a_{2,1} = 0$ and setting $x := 0, y := 1$ yields $AX=XA$. Therefore, for all $A \in \ULT(n,p) \setminus \gcentre{\ULT(n,p)}$ there exists $X \in \mathcal{X}$ such that $A$ commutes with $X$. Suppose $A, B \in \ULT(n,p) \setminus \gcentre{\ULT(n,p)}$. Then there exists $X, Y \in \mathcal{X}$ with $A \sim X$ and $B \sim Y$. Since the elements of $\mathcal{X}$ pairwise commute,  $A \sim X \sim Y \sim B$ is a path of length $3$ between $A$ and $B$ in $\cg{\ULT(n,p)}$. Thus $\cg{\ULT(n,p)}$ is connected with $\diam{\cg{\ULT(n,p)}} \leq 3$.

Next we construct two elements of $\ULT(n,p)$ whose centralisers intersect only on the centre of $\ULT(n,p)$ to show that $\diam{\cg{\ULT(n,p)}} = 3$. Put 
$$A := I_n+\begin{bmatrix}0_{1\times n-1} & 0 \\ I_{n-1} & 0_{n-1\times 1} \end{bmatrix}$$
 and $B := I_n+E_{2,1}$.

Take some $X =(x_{i,j}) \in \ULT(n,p)$ commuting with $A$. Examining the proof of  \cite[Lemma 2.4]{giudici2010}
we can see that  for any $k \in \{1, \ldots, n\}$ we have that $x_{k+i, 1+i} = x_{k,1}$ for all $i \in \{1, \ldots, n-k\}$. That is, all the entries on any given top-left to bottom-right diagonal of $X$ below the main diagonal are equal, not just those along the main diagonal.

Suppose that $X$ also commutes with $B$. Set $T := XB = BX = (t_{i,j})$. Consider the first column of $T$; for any $k \geq 3$ evaluating $BX$ gives $t_{k,1} = x_{k,1}$ and evaluating $XB$ yields $t_{k,1} = x_{k,1} + x_{k,2}$. Equating these values for $t_{k,1}$ yields $x_{k,1} = x_{k,1} + x_{k,2}$ and cancelling gives $x_{k,2} = 0$ for all $k \geq 3$. Since $X$ commutes with $A$, all the elements on the diagonal of $X$ containing $x_{k,2}$ are equal and so are all zero for $k \geq 3$. Thus every diagonal of $X$ contains all zero entries except for the main diagonal and $x_{n,1}$. So $X \in \gcentre{\ULT(n,p)}$. Therefore $\cent{\ULT(n,p)}{A} \cap \cent{\ULT(n,p)}{B} = \gcentre{\ULT(n,p)},$ which implies $\dist{A}{B} \geq 3$ in $\cg{\ULT(n,p)}$ and the result follows.
\end{proof}

\begin{wraptable}{r}{0.4\textwidth}
\label{tGroupsDiam6}
\begin{center}
\begin{tabular}{|l|r|}
\hline
Order & Number \\
\hline
1152 & 157451 \\
1152 & 157452 \\
1176 & 90 \\
1176 & 91 \\
1176 & 92 \\
1176 & 95 \\
1176 & 96 \\
1176 & 97 \\
1176 & 214 \\
1500 & 115 \\
1728 & 47862 \\
1944 & 2289 \\
1944 & 2290 \\
\hline
\end{tabular}
\end{center}
\caption{The order and small group database number of all the groups with commuting graph diameter $6$ of order not equal to $1024$ or $1536$ and at most $2000$.}
\label{tab:diam6}
\end{wraptable}

Calculating the diameter of a graph takes a number of computations proportional to the number of vertices of the graph cubed. Vahidi and Talebi \cite{vahidi} observed that the diameter of the commuting graph of a group is equal to the diameter of the subgraph induced by a transervsal of the center. Since most groups have non-trivial centre, this observation greatly speeds up a computer program to calculate the diameter of the commuting graph of many groups. Using this optimisation, we have used the \textsc{Magma}\cite{magma} implementation of the small group database \cite{besche2001, besche} to calculate the diameter of the commuting graphs of all groups with order up to 2000 except those with order 1024 and 1536. The largest commuting graph diameter found was $6$, and only $13$ such groups were found.  Table \ref{tab:diam6} gives the order and number in the small groups database of all of the $13$ groups found with commuting graph diameter $6$.

The subgroup of $\GL(3,7)$ generated by the matrices 
$$\left[
\begin{array}{ccc}
3&6&2\\
2&0&1\\
0&0&1\\
\end{array}
\right] \textrm{ and } \left[
\begin{array}{ccc}
0&4&1\\
5&0&3\\
0&0&1\\
\end{array}
\right]$$
 has commuting graph diameter $6$ and is isomorphic to the group numbered $214$ of order $1174$ in the small groups database. Inspired by this group discovered through our computer search, we will now construct an infinite family of groups with trivial centre and commuting graph having diameter $6$.

\begin{construction}\label{conW}
Let $p$ be a prime of the form $p=3n+1$ for some integer $n$. Let $\alpha$ be an element of order $3$ in $\Z_p^*$, the multiplicative group of $\Z_p$, and set $\be := \al + 1$. Then $\al\neq 1, \al^3 = 1, \be \neq -1, \be^3 = -1$ and $\al^2 + \be^2=-1$.  In $\GL(2,p)$, let $I$ be the identity matrix and take
$$
J := \begin{bmatrix}0&1\\-1&0\end{bmatrix},
K := \begin{bmatrix}\al&\be\\\be&-\al\end{bmatrix},
L := \begin{bmatrix}\be&-\al\\-\al&-\be\end{bmatrix} \in \GL(2,p).$$
Note that $J$, $K$ and $L$ are elements of order 4 squaring to $-I$ and so $\mQ := \langle J, K, L \rangle \cong Q_8$.

Let
$$Z := \begin{bmatrix}-\be&\al\\0&1\end{bmatrix} \in \GL(2,p)$$ and note that $Z$ is an element of order 3 such that $J^Z=K$, $K^Z=L$ and $L^Z=J$. Thus  $\mS := \langle J,K,L,Z \rangle$ is a subgroup of $\GL(2,p)$ isomorphic to $\SL(2,3)$.

Let  $\W := \Z_p^2\rtimes \mS\leqslant \AGL(2,p)$.   Let $x\in \W$. Then we can write $x$ as $(\bx,X)$ for some $\bx \in \Z_p^2$ and $X \in \mS$. For another element $y = (\by,Y)\in \W$ we have
\begin{equation}
\label{wproduct}
xy=(\bx+\by X^{-1}, XY)
\end{equation}
and $x$ has inverse
\begin{equation}
\label{winverse}
x^{-1}=(-\bx X, X^{-1}).
\end{equation}
We identify $\mS$ and $\Z_p^2$ with the subgroups $\{((0,0),X) | X \in \mS\}$ and $\{(\bx,I)| \bx \in \Z_p^2\}$ of $\W$ respectively. When $\bx =(0,0)=e_{\Z_p^2}$ in $x=(\bx,X)$ then sometimes we omit the $\bx$ and the ordered pair and unambiguously write $x=X$.
\end{construction}

We note that the group $\W$ in Construction \ref{conW} has order $2^3.3p^2$ and is soluble as it is the semidirect product of two soluble groups.

We will see in Lemma \ref{threetop} that $Z$ has the important property of fixing $(0,1) \in \Z_p$ which precipitates $\cg{\W}$ being connected. 

The goal of the remainder of this section is to prove the following theorem:
\begin{theorem}
\label{diam6}
The commuting graph of the group  $\W$ in Construction \ref{conW} has diameter $6$.
\end{theorem}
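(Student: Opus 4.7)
The plan is to prove the two directions $\diam{\cg{\W}}\leq 6$ and $\diam{\cg{\W}}\geq 6$ separately. Using the semidirect product formulas \eqref{wproduct} and \eqref{winverse}, I would first record the commutation criterion: $(\bx,X)$ and $(\by,Y)$ commute in $\W$ if and only if $XY=YX$ in $\mS$ and $\bx(Y^{-1}-I)=\by(X^{-1}-I)$. A short calculation then shows $\gcentre{\W}=1$ (since $-I\in\mS$ fails to centralise any non-zero translation when $p$ is odd) and that $X\in\mS\cong\SL(2,3)$ fixes a non-zero vector of $\Z_p^2$ precisely when $X=I$ or $X$ has order $3$. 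The four Sylow $3$-subgroups of $\mS$ therefore contribute four one-dimensional fixed subspaces in $\Z_p^2$, one of which is the line through $(0,1)$ fixed by $Z$.

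For the upper bound, my plan is to route paths through two hubs. The pure translations $T=\{(\bx,I) : \bx\neq 0\}$ form a clique in $\cg{\W}$, and the induced subgraph on $\{(0,X) : X\in\mS\setminus\{I\}\}$ has diameter at most $2$ since $(0,-I)$ commutes with every such vertex. I would then show that every non-central element lies within small distance of one of these hubs by chasing the commutation equation: a pure $\mS$-element $(0,X)$ is at distance $1$ from $T$ when $X$ has order $3$ (using a translation in its $1$-eigenspace) and at distance $2$ otherwise (via an order-$3$ element of $\mS$); and a mixed element $(\bx,X)$ is connected to elements of the form $(\by,-I)$ or $(\by,-X)$ in its centraliser, which in turn reach the hubs in a controlled number of further steps. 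Splicing these routings together, any two non-central elements are joined by a path of length at most $6$.

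For the lower bound, I would exhibit a specific pair at distance exactly $6$. Since the smallest centralisers among non-central elements belong to elements $(\bx,X)$ with $X$ of order $4$ (centraliser of size $4$ in $\W$, parametrised by $Y\in\langle X\rangle$ with $\by$ uniquely determined), good candidates for antipodes are $u=(\ba,J)$ and $v=(\bbb,K)$ for carefully chosen $\ba,\bbb\in\Z_p^2$. By the commutation criterion, $u$ has exactly two non-self, non-identity neighbours, namely the unique elements with $\mS$-projections $-I$ and $-J$; similarly for $v$ with projections $-I$ and $-K$. I would then enumerate all hypothetical length-$5$ walks $u=z_0\to z_1\to z_2\to z_3\to z_4\to z_5=v$ by working through the possible sequences of $\mS$-projections (constrained by pairwise commutation in $\SL(2,3)$) and the induced affine constraints on the $\Z_p^2$-components, ruling each one out using the identities $\al^3=1$, $\be^3=-1$, $\al^2+\be^2=-1$ and the cycling action $J^Z=K$, $K^Z=L$, $L^Z=J$.

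I expect the lower bound to be the main obstacle. While each endpoint has very few neighbours, systematically eliminating every length-$5$ walk requires a careful case analysis on the $\mS$-projections of the intermediate vertices, combined with a direct computation of the affine linear constraints these force in $\Z_p^2$. The upper bound, by contrast, should reduce to a bounded number of cases of routing through the two hubs once the centraliser structure has been computed.
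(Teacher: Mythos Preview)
Your overall architecture matches the paper's, but there is a genuine gap in the upper-bound sketch and a significant efficiency difference in the lower bound.

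\textbf{Upper bound.} The claim that a pure $\mS$-element $(0,X)$ reaches the translation clique $T$ in at most two steps ``via an order-$3$ element of $\mS$'' fails when $X$ has order~$4$. Since $J^{-1}-I$ is invertible over $\Z_p$, your own commutation criterion gives $\cent{\W}{(0,J)}=\langle(0,J)\rangle$, so the only neighbours of $(0,J)$ are $(0,-I)$ and $(0,-J)$; and the same invertibility argument yields $\cent{\W}{(0,-I)}=\mS$, so neither neighbour meets $T$. Hence $(0,J)$ sits at distance exactly $3$ from $T$, and routing through the clique as you describe gives only $\diam{\cg{\W}}\leq 7$. The repair, which is precisely what the paper does, is to route every vertex to a \emph{fixed} translation $w=((0,1),I)$ rather than to the clique as a whole: every involution $((a_1,a_2),-I)$ is at distance $\leq 2$ from $w$ via an explicit element of the form $((*,*),Z)$; every element of order $3$ commutes with some translation and hence is at distance $\leq 2$ from $w$; every element of order $p$ is adjacent to $w$. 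Then for arbitrary non-central $g$, pick any prime-order element in $\cent{\W}{g}$ and conclude $\dist{g}{w}\leq 3$, whence $\diam{\cg{\W}}\leq 6$.

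\textbf{Lower bound.} Your proposed enumeration of all length-$5$ walks between two order-$4$ elements would succeed in principle, but the paper avoids almost all of that case analysis by a clean reduction. You have already observed that an order-$4$ element $q$ satisfies $\cent{\W}{q}=\langle q\rangle$; this forces every path out of $q$ to pass first through the involution $q^2$ (since $q^3$ has the same neighbours as $q$). So it suffices to exhibit two involutions at distance $\geq 4$. Taking $a=((1,1),-I)$ and $b=(0,-I)$, one checks directly that $\cent{\W}{a}\cap\cent{\W}{b}=\{e\}$ (a short search over the eight order-$3$ elements of $\mS$), and then rules out any length-$3$ path $a\sim x\sim y\sim b$: for $|x|\in\{4,6\}$ the centraliser is cyclic so $y\in\langle x\rangle\subseteq\cent{\W}{a}$; for $|x|=|y|=3$ one has $|\langle x,y\rangle|\mid\gcd(9,|\W|)=3$, again forcing $y\in\langle x\rangle$; and $|x|=2$ is impossible since $a$ is the unique involution in $\cent{\W}{a}$. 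Picking $q,r$ of order $4$ with $q^2=a$, $r^2=b$ then gives $\dist{q}{r}\geq 2+\dist{a}{b}\geq 6$. This replaces your open-ended walk enumeration by a handful of short cases.
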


In order to highlight why this theorem is true, we describe the relevant properties of $\W$ in a series of lemmas.

\begin{lemma}
\label{centnegi}
$\cent{\W}{-I} = \mS$.
\end{lemma}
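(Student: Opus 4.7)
The plan is to verify the two inclusions $\mS \subseteq \cent{\W}{-I}$ and $\cent{\W}{-I} \subseteq \mS$ separately, using the explicit multiplication rule \eqref{wproduct} from Construction \ref{conW}.

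For the inclusion $\mS \subseteq \cent{\W}{-I}$, I would first note that $-I$ is a scalar matrix in $\GL(2,p)$ and therefore commutes with every element of $\mS$ as matrices. Under the identification of $\mS$ with $\{((0,0), X) \mid X \in \mS\} \leq \W$ and $-I$ with $((0,0),-I)$, a one-line application of \eqref{wproduct} shows $((0,0),X) \cdot ((0,0),-I) = ((0,0), X(-I)) = ((0,0), (-I)X) = ((0,0),-I) \cdot ((0,0), X)$.

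For the reverse inclusion, I would take an arbitrary $(\bx, X) \in \cent{\W}{-I}$ and compute both sides of the commuting relation using \eqref{wproduct}, noting $(-I)^{-1} = -I$. This gives
$$(\bx, X)\cdot((0,0),-I) = (\bx, -X) \quad \text{and} \quad ((0,0),-I)\cdot (\bx, X) = (-\bx, -X).$$
Equating first coordinates forces $2\bx = 0$ in $\Z_p^2$. Since $p = 3n+1$ for an integer $n\geq 1$ (the case $p=2$ is impossible), $p$ is odd, so $2$ is invertible modulo $p$ and hence $\bx = (0,0)$. Thus $(\bx,X) \in \mS$.

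There is really no obstacle here; the statement reduces to a direct computation in the semidirect product, and the only subtlety worth flagging is that $p$ must be odd, which is forced by the form $p=3n+1$ required in the construction (needed to produce an element $\alpha$ of order $3$ in $\Z_p^\ast$).
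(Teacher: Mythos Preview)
Your proof is correct and follows essentially the same approach as the paper's own proof: both verify $\mS \subseteq \cent{\W}{-I}$ directly, then compute both sides of the commuting relation via \eqref{wproduct} to force $\bx = -\bx$, and conclude $\bx=(0,0)$ since $p$ is odd. The only difference is that you spell out the first inclusion and the computation slightly more explicitly than the paper does.
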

\begin{proof}
Clearly  $\mS \leq \cent{\W}{-I}$. Take some $a \in \cent{\W}{-I}$, so $a = (\ba,A)$ for some $\ba \in \Z_p^2$ and $A \in \mS$. Then $(-I)a=a(-I) = (\bbb,B)$ for some $\bbb \in \Z_p^2$ and $B \in \mS$. Using equation \eqref{wproduct}, evaluating $(-I)a$ we see that $\bbb = -\ba$ and evaluating $a(-I)$ we have $\bbb=\ba$.  Since $p$ is odd, it follows that  $\ba=(0,0)$, consequently $a \in \mS$. Hence $\cent{\W}{-I} \leq \mS$ and in fact equality holds.
\end{proof}

In this next lemma we discover that involutions in $\W$ do not commute with elements of the subgroup $\Z_p^2 \lhd \W$.
\begin{lemma}
\label{order2}
For any involution $g \in \W$ we have $\cent{\W}{g} \cong \mS$ and so $|\cent{\W}{g}|=24$.
\end{lemma}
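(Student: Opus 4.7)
The plan is to reduce to Lemma \ref{centnegi} by showing that every involution in $\W$ is conjugate to $-I$. This will give $\cent{\W}{g} \cong \cent{\W}{-I} = \mS$ up to conjugation in $\W$, and in particular equal cardinality $|\mS| = 24$.

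First I would classify the involutions of $\W$. Using the multiplication rule \eqref{wproduct}, for $g=(\bx,X) \in \W$ we have $g^2 = (\bx + \bx X^{-1}, X^2)$. For $g$ to be an involution, we need $X^2 = I$, and since $\mS \cong \SL(2,3)$ contains only one involution, either $X = I$ or $X = -I$. The case $X = I$ gives $g^2 = (2\bx, I)$, which is trivial only when $\bx = 0$ (using that $p$ is odd), so $g = e_{\W}$, not an involution. Hence every involution has the form $g = (\bx, -I)$ for some $\bx \in \Z_p^2$, and conversely any such element is indeed an involution since $\bx + \bx(-I)^{-1} = \bx - \bx = 0$.

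Next I would show every involution $g = (\bx, -I)$ is conjugate in $\W$ to $-I$. Let $\by = \bx/2 \in \Z_p^2$, which exists since $p$ is odd, and consider conjugation of $g$ by $(\by, I) \in \W$. A direct computation using \eqref{wproduct} and \eqref{winverse} gives $(\by, I)^{-1} g (\by, I) = (\bx - 2\by, -I) = (0, -I) = -I$. Hence all involutions of $\W$ form a single conjugacy class together with $-I$.

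Finally, conjugation is an automorphism of $\W$, so centralisers of conjugate elements are conjugate and hence isomorphic. Combined with Lemma \ref{centnegi}, we conclude $\cent{\W}{g} \cong \cent{\W}{-I} = \mS$ for every involution $g$, and in particular $|\cent{\W}{g}| = |\mS| = 24$. There is no real obstacle here beyond the two routine computations (determining the form of involutions and carrying out the explicit conjugation); the only subtlety is the invocation of $p \neq 2$ to invert $2 \in \Z_p$.
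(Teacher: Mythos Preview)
Your proof is correct and follows the same overall strategy as the paper: show that every involution of $\W$ is conjugate to $-I$, then invoke Lemma~\ref{centnegi}. The difference lies only in how conjugacy is established. The paper argues abstractly via Sylow's Theorem---any involution lies in some Sylow $2$-subgroup, which is conjugate to $\mQ$, and $-I$ is the unique involution of $\mQ\cong Q_8$. You instead classify involutions explicitly as the elements $(\bx,-I)$ and exhibit the conjugating element $(\bx/2,I)$ directly. Your route is more elementary and self-contained (no Sylow theory needed, just the semidirect-product arithmetic), at the cost of a couple of extra lines of computation; the paper's route is shorter but imports more machinery. Both are perfectly fine.
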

\begin{proof}
Let $g$ be some involution in $\W$. By Sylow's Theorems, $\langle g \rangle$ is contained in some Sylow $2$-subgroup $H$ of $\W$
 and there exists some $x \in \W$ such that $H^x = \mQ$. Thus $g^x$ is an involution in $\mQ$.  Since $-I$ is the only involution in $\mQ\cong Q_8$,  Lemma \ref{centnegi} reveals that $\cent{\W}{g} \cong \cent{\W}{-I} = \mS$ and so $|\cent{\W}{g}|=|\mS|=24$.
\end{proof}

We collect some easily verifiable facts about $\SL(2,3)$.
\begin{lemma}
\label{sl23classes}
~
\begin{itemize}
 \item There are only two conjugacy classes of elements of order $6$ in $\SL(2,3)$, and if $g$ is an element of order $6$ then $g$ and $g^{-1}$ lie in different conjugacy classes. 
 \item There is only one conjugacy class of elements of order $4$ in $\SL(2,3)$. 
 \item If $g\in\SL(2,3)$ has order $4$ or $6$ then $\cent{\SL(2,3)}{g}=\langle g\rangle$.
\end{itemize}
\end{lemma}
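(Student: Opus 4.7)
The plan is to exploit two structural facts about $\SL(2,3)$: its unique Sylow $2$-subgroup is $\mQ \cong Q_8$, and the quotient $\SL(2,3)/\{\pm I\} \cong A_4$. Neither claim is deep, so I would aim for a short census-and-centraliser argument rather than invoking character tables.

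First I would inventory elements by order. The quaternion Sylow $2$-subgroup contributes the identity, the involution $-I$, and six elements of order $4$. Sylow's theorems give either one or four Sylow $3$-subgroups; since the $C_3$-action on $\mQ$ is nontrivial (so $\SL(2,3) \not\cong \mQ \times C_3$), there are four of them, accounting for eight elements of order $3$. The remaining $24-1-1-6-8 = 8$ elements have order $6$.

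Next I would handle the order-$4$ class. The three $\mQ$-conjugacy classes $\{\pm J\}, \{\pm K\}, \{\pm L\}$ of order-$4$ elements are permuted cyclically by conjugation by $Z$ (already established in Construction \ref{conW}), fusing into a single $\SL(2,3)$-class of size $6$. Hence for any order-$4$ element $g$, $|\cent{\SL(2,3)}{g}| = 24/6 = 4 = |\langle g \rangle|$, so $\cent{\SL(2,3)}{g}=\langle g\rangle$. For order-$3$ elements $h$, the centraliser contains $\langle h, -I\rangle \cong C_6$, so its order is $6$, $12$, or $24$. Order $24$ would make $h$ central, and order $12$ would give a subgroup of index $2$, impossible since the derived subgroup of $\SL(2,3)$ is $\mQ$ and the abelianisation is $C_3$. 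Thus $\cent{\SL(2,3)}{h}=\langle h,-I\rangle$ has order $6$ and the class has size $4$, giving two classes of order-$3$ elements (accounting for the eight such elements). For order-$6$ elements $g$, note $g^3=-I$ and $g^2$ has order $3$; then
$$\cent{\SL(2,3)}{g} \leq \cent{\SL(2,3)}{g^2} = \langle g^2,-I\rangle = \langle g\rangle,$$
with equality since $\langle g\rangle\leq \cent{\SL(2,3)}{g}$. The class of $g$ therefore has size $4$, so the eight order-$6$ elements split into exactly two conjugacy classes.

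Finally I would separate $g$ from $g^{-1}$ by pushing down to $A_4 = \SL(2,3)/\{\pm I\}$. The image of a $g$ of order $6$ is an element of order $3$, that is, a $3$-cycle; its image in $A_4$ together with that of $g^{-1}$ form inverse $3$-cycles, and the two $A_4$-conjugacy classes of $3$-cycles are inverses of each other (they are fused only in $S_4$, not by an even permutation). Hence the images of $g$ and $g^{-1}$ already lie in distinct $A_4$-classes, ruling out conjugacy in $\SL(2,3)$.

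The only step with any subtlety is the elimination of an index-$2$ centraliser for order-$3$ elements; everything else is Sylow counting and the quotient map to $A_4$. The fusion of the three $\mQ$-classes under $Z$ has already been recorded in Construction \ref{conW}, so that part comes for free.
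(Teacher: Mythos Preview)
Your argument is correct. The paper itself offers no proof of this lemma at all: it simply introduces the statement with ``We collect some easily verifiable facts about $\SL(2,3)$'' and moves on. So rather than matching or diverging from the paper's approach, you have supplied a proof where the paper declines to give one.

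Your route---counting elements by order via the normal Sylow $2$-subgroup $\mQ\cong Q_8$, using the fusion $J^Z=K$, $K^Z=L$, $L^Z=J$ already recorded in Construction~\ref{conW} to merge the order-$4$ classes, bounding centralisers of order-$3$ and order-$6$ elements, and descending to $A_4$ to separate $g$ from $g^{-1}$---is clean and self-contained. The exclusion of an index-$2$ centraliser via the abelianisation $\SL(2,3)/\mQ\cong C_3$ is the right way to avoid ad hoc computation there. One could alternatively just read everything off the character table of $\SL(2,3)$, but your structural argument is more in keeping with the paper's hands-on style and reuses facts the paper has already set up.
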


For the proof of Lemma \ref{order46} below, we denote the conjugacy class of an element $g$ in a group $G$ by $\cl{G}{g}$.
\begin{lemma}
\label{order46}
If $g\in \W$ has order $4$ or $6$ then  $\cent{\W}{g} = \langle g \rangle$ and as an immediate consequence there are no elements of order $8, 12$ or $24$ in $\W$.
\end{lemma}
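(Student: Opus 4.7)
The plan is to reduce the centralizer computation to one inside a copy of $\SL(2,3)$ and then invoke Lemma \ref{sl23classes} directly. The key observation is that if $g \in \W$ has order $4$ or $6$, then a suitable power of $g$ is an involution: put $h := g^2$ when $|g| = 4$ and $h := g^3$ when $|g| = 6$. Since $g$ commutes with any power of itself, we have $\cent{\W}{g} \leq \cent{\W}{h}$, and by Lemma \ref{order2} the larger centralizer $\cent{\W}{h}$ is isomorphic to $\mS \cong \SL(2,3)$.

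From here I would argue that $\cent{\W}{g}$ coincides with the centralizer of $g$ computed inside the subgroup $\cent{\W}{h}$, since anything centralizing $g$ automatically centralizes $h = g^k$. Inside this copy of $\SL(2,3)$, the element $g$ retains its order ($4$ or $6$), so Lemma \ref{sl23classes} gives that its centralizer there is exactly $\langle g \rangle$. Combining these two facts yields $\cent{\W}{g} = \langle g \rangle$, as required.

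For the consequence that $\W$ contains no elements of order $8$, $12$ or $24$, I would argue by contradiction in each case using the part just proved. If $h \in \W$ had order $8$, then $h^2$ would have order $4$ and $h \in \cent{\W}{h^2} = \langle h^2 \rangle$, a cyclic group of order $4$, contradicting $|h| = 8$. The remaining cases are essentially identical: an element of order $12$ would lie in the centralizer of its square, which has order $6$, and an element of order $24$ would lie in the centralizer of $h^4$, again of order $6$.

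The only step requiring any thought is the first one, namely ensuring that the isomorphism $\cent{\W}{h} \cong \SL(2,3)$ from Lemma \ref{order2} really does let us transport the structural information of Lemma \ref{sl23classes}; this is immediate because an abstract group isomorphism preserves both orders of elements and centralizers of subsets. There is no genuine obstacle — the real work was done in establishing Lemmas \ref{order2} and \ref{sl23classes}.
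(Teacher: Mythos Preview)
Your proposal is correct and takes a genuinely different, more economical route than the paper.

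The paper proceeds by conjugating $g$ into $\mS$: for $|g|=4$ it uses Sylow's theorems to land in $\mQ$ and then Lemma~\ref{sl23classes} to reach $J$; for $|g|=6$ it invokes Hall's theorem (using the solubility of $\W$) to land in $\mS$ and then reaches $JZ^{-1}$ or its inverse. In each case the paper then carries out an explicit coordinate computation in $\Z_p^2$ to show that anything centralising $J$ (respectively $JZ^{-1}$) must have trivial $\Z_p^2$-component, after which Lemma~\ref{sl23classes} finishes the job. Your argument bypasses all of this by noting that $g^{|g|/2}$ is an involution $h$, so $\cent{\W}{g}\leq\cent{\W}{h}\cong\SL(2,3)$ by Lemma~\ref{order2}, and then Lemma~\ref{sl23classes} applies immediately inside that copy. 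In effect you recognise that the ``kill the $\Z_p^2$-part'' computation was already done once in Lemma~\ref{centnegi}/\ref{order2} and need not be repeated; the paper's approach redoes that work twice, and also needs Hall's theorem, which yours does not. The paper's version does make the conjugacy to the concrete matrices $J$ and $JZ^{-1}$ explicit, but that information is not used later, so nothing is lost by your shortcut. Your treatment of the order $8$, $12$, $24$ consequence matches the paper's.
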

\begin{proof}
First we determine $\cent{\W}{g}$ when $|g|=4$. It turns out that $g$ is conjugate to $J \in \mQ$, whose centraliser in $\W$ can be shown to have order $4$. Indeed, by Sylow's Theorems,  $g$ is contained in some Sylow $2$-subgroup $H$ of $\W$ and hence there exists $x \in \W$ such that $g^x\in \mQ\cong Q_8$. By Lemma \ref{sl23classes}, all elements of order 4 in $\mS$ are conjugate so 
$g^x \in \cl{\W}{J}$. Hence $\cent{\W}{g} \cong \cent{\W}{J}$ and it remains to show $\cent{\W}{J}=\langle J\rangle$.

Suppose some $x \in \W$ commutes with $J=\begin{bmatrix}0&1\\-1&0\end{bmatrix}$. Then $x = ((x_1,x_2),X)$ for some $(x_1,x_2) \in \Z_p^2$ and $X \in \mS$. Let $a := xJ = Jx = (\ba,A)$ for some $\ba \in \Z_p^2$ and $A\in \mS$. Evaluating $Jx$ we see that $\ba=(x_1,x_2)J^{-1}=(x_2,-x_1)$ and evaluating $xJ$ gives $\ba=(x_1,x_2)$. So $x_2=x_1$ and $-x_1=x_2$. Substituting and rearranging gives $2x_1=0$ which, as $p$ is odd, implies that $x_1=x_2=0$. So $x \in \mS$ and $\cent{\W}{J}=\cent{\mS}{J}=\langle J\rangle$, by Lemma \ref{sl23classes}.

What happens instead when $|g|=6$? In this case, $\langle g \rangle$ is a $\{2,3\}$-subgroup of $\W$. Since $\W$ is soluble,  Hall's Theorems imply that there exists some Hall $\{2,3\}$-subgroup $H$ of $\W$ containing $\langle g \rangle$. Moreover, all Hall $\{2,3\}$-subgroups are conjugate in $\W$. The group $\mS$ has index $p^2$ in $\W$, so is a Hall $\{2,3\}$-subgroup of $\W$. Thus there exists some $x \in \W$ such that $H^x \leq \mS$ and $g^x$ is some element of order $6$ in $\mS$. By Lemma \ref{sl23classes}, there are only two conjugacy classes of order $6$ in $\mS$ and these classes have representatives $JZ^{-1}$ and $(JZ^{-1})^{-1}=ZJ^{-1}$. Hence $g^x$ is conjugate to $JZ^{-1}$ or $ZJ^{-1}$, and note that $\cent{\W}{JZ^{-1}}=\cent{\W}{ZJ^{-1}}$.

Take any $y \in \cent{\W}{JZ^{-1}}$. So $y=((y_1,y_2),Y)$ for some $(y_1,y_2) \in \Z_p^2$ and $Y \in \mS$. Set $a := y(JZ^{-1})=(JZ^{-1})y= (\ba, A)$ for some $\ba \in \Z_p^2$ and $A\in S$. Evaluating $y(JZ^{-1})$ by equation \eqref{wproduct} we see that $\ba = (y_1,y_2)$ and evaluating $(JZ^{-1})y$ gives $\ba=(\al y_1+y_2, \be y_2)$. Equating these two expressions for $\ba$ yields $y_1=\al y_1+y_2$ and $y_2=-\be y_1$.  Hence $y_1=y_1(\al-\be)=-y_1$, since $\be=\al+1$.  Thus $y_1=0=y_2$. Therefore $y \in \mS$ and $\cent{\W}{JZ^{-1}} =\cent{\mS}{JZ^{-1}}=\langle JZ^{-1}\rangle$, by Lemma \ref{sl23classes}.

 Suppose that there exists an element $h \in \W$ of order $8$. Then $\langle h \rangle$ is an abelian group so $h^2$ commutes with all $8$ elements of $\langle h \rangle$ in $\W$. But $h^2$ has order $4$ and so only commutes with $4$ elements in $\W$, a contradiction. Hence there are no elements of order $8$ in $\W$. Elements of order $12$ or $24$ in $\W$ similarly give rise to elements of order $6$ commuting with more than $6$ elements of $\W$.
\end{proof}

\begin{lemma}
\label{trivialcentre}
The centre of $\W$ is trivial.
\end{lemma}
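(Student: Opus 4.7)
My plan is to reduce the problem to Lemma \ref{centnegi} and then kill the remaining degrees of freedom by making any central candidate commute with the translation subgroup $\Z_p^2 \leq \W$.

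First I would observe that, since $-I \in \mS \leq \W$, every element of $\gcentre{\W}$ must lie in $\cent{\W}{-I}$. By Lemma \ref{centnegi}, $\cent{\W}{-I} = \mS$, so immediately $\gcentre{\W} \leq \mS$. In other words, any central element of $\W$ has the form $(\mathbf{0}, X)$ for some $X \in \mS$.

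Next I would force such an element to commute with an arbitrary translation $(\by, I) \in \Z_p^2 \leq \W$. Using the multiplication formula \eqref{wproduct}, commutation gives $\by X^{-1} = \by$ for every $\by \in \Z_p^2$, and hence $X = I$. Therefore $\gcentre{\W} = \{((0,0), I)\}$ is trivial.

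There is no real obstacle here: the work has already been done in Lemma \ref{centnegi}, which is precisely the place where the hypothesis that $p$ is odd enters (to rule out a nontrivial translation part via $2\bx = \mathbf{0} \Rightarrow \bx = \mathbf{0}$). The rest is a one-line application of the semidirect-product multiplication formula.
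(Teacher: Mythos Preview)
Your proof is correct, and the route you take is genuinely different from the paper's. The paper argues via Lemmas~\ref{order46} and~\ref{order2}: since an element $g$ of order $4$ has $\cent{\W}{g}=\langle g\rangle$, any central element lies in $\langle g\rangle$ and hence has order dividing $4$; order $4$ is already excluded, and order $2$ is then ruled out because every involution has centraliser of size $24$. Your argument is more direct: you first trap $\gcentre{\W}$ inside $\mS$ using only Lemma~\ref{centnegi}, and then a single computation with the translation subgroup forces the matrix part to be $I$. Your approach has the advantage of using less machinery (you avoid Lemmas~\ref{order2} and~\ref{order46} entirely) and of making transparent the structural reason the centre is trivial, namely that $\mS$ acts faithfully on $\Z_p^2$. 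The paper's approach, on the other hand, fits more naturally into the flow of the surrounding lemmas, which are all about centraliser sizes of elements of given order and are needed anyway for the diameter bounds.
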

\begin{proof}
By Lemma \ref{order46}, an element $g$ of order $4$ in $\W$  has centraliser of order $4$ and so is not central. However, $g$ commutes with every central element of $\W$, and so a nontrivial central element of $\W$ has order $2$. But, by Lemma \ref{order2}, every involution of order $2$ in $\W$ commutes with only $24$ elements of $\W$ and so is non-central. Therefore $\gcentre{\W} = \{e\}$.
\end{proof}

Now we give two involutions in $\W$ that have distance at least $3$ in $\cg{\W}$.
\begin{lemma}
\label{noncominvolutions}
Let $a := ((1,1),-I) \in \W$. Then $\cent{\W}{-I}\cap \cent{\W}{a} = \{e\}$.
\end{lemma}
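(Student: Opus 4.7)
My plan is to first use Lemma \ref{centnegi} to reduce: since $\cent{\W}{-I} = \mS$, any element of the intersection lies in $\mS$ and so has the form $((0,0), X)$ with $X \in \mS$. Applying the product formula \eqref{wproduct} gives
$$((0,0), X)((1,1), -I) = ((1,1)X^{-1}, -X)
\quad\text{and}\quad
((1,1), -I)((0,0), X) = ((1,1), -X),$$
so $X$ commutes with $a$ iff $X$ fixes the row vector $(1,1) \in \Z_p^2$ under right multiplication. The problem therefore reduces to showing that no non-identity $X \in \mS$ fixes $(1,1)$.

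I will case-split on $|X| \in \{2,3,4,6\}$, using $\mS \cong \SL(2,3)$. The unique involution in $\mS$ is $-I$, and $(1,1)(-I) = (-1,-1) \neq (1,1)$ since $p$ is odd. For $|X| = 4$ we have $X^2 = -I$ and for $|X| = 6$ we have $X^3 = -I$; in either case $X$ fixing $(1,1)$ would force $-I$ to fix $(1,1)$, a contradiction.

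The substantive case is $|X|=3$. Each such $X$ has $\det X \in \{\al, \al^{-1}\}$ (since the kernel $\mQ\cong Q_8$ of $\det:\mS\to \Z_p^*$ contains no order-$3$ element), and the two cube-root-of-unity eigenvalues of $X$ therefore multiply to a primitive cube root. Ruling out the scalars $\al^{\pm 1}I$ (which would be central in $\mS$ of order $3$, contradicting $\gcentre{\mS}=\{\pm I\}$) and non-diagonalisable order-$3$ matrices (which would require $p=3$), the eigenvalues must be $\{1,\al^{\pm 1}\}$, so $X$ fixes exactly one line. Since the four Sylow $3$-subgroups of $\mS$ are all conjugate, their fixed lines form a single $\mS$-orbit, which I will exhibit by conjugating $Z$ (whose fixed line is $\langle(0,1)\rangle$ by inspection) by coset representatives of $N_{\mS}(\langle Z\rangle)=\langle Z,-I\rangle$; direct computation with $I,J,K,L$ produces the four lines $\langle(0,1)\rangle$, $\langle(1,0)\rangle$, $\langle(1,-\be)\rangle$, $\langle(1,-\al)\rangle$. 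The vector $(1,1)$ lies on none: the first two by inspection, and membership in the latter two would force $\be=-1$ or $\al=-1$ respectively, both excluded by Construction \ref{conW}.

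The main obstacle is the order-$3$ case. Unlike orders $2,4,6$, it does not collapse to a statement about $-I$, and order-$3$ elements of $\mS$ genuinely have non-trivial fixed lines. Everything hinges on the vector $(1,1)$ missing all four of these lines, which in turn relies on the defining properties $\al\neq -1$ and $\be\neq -1$.
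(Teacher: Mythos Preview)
Your proof is correct and shares the paper's overall skeleton: reduce via Lemma \ref{centnegi} to $X\in\mS$ fixing the row vector $(1,1)$, eliminate even-order $X$ using that the unique involution $-I$ does not fix $(1,1)$, and then deal with order $3$. The paper packages the even-order elimination slightly more succinctly (``no power of $X$ can equal $-I$, so $|X|$ is odd''), but your case split on $|X|\in\{2,4,6\}$ via $X^2=-I$ or $X^3=-I$ is the same idea unrolled.

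The genuine divergence is in the order-$3$ case. The paper simply writes down the eight order-$3$ elements $Z^{\pm1},(JZ)^{\pm1},(KZ)^{\pm1},(LZ)^{\pm1}$ and (implicitly) checks by direct matrix multiplication that none of them fixes $(1,1)$. You instead argue structurally: use $\det:\mS\to\langle\al\rangle$ with kernel $\mQ$ to see that an order-$3$ element has eigenvalues $\{1,\al^{\pm1}\}$, hence a unique fixed line; then transport the fixed line $\langle(0,1)\rangle$ of $Z$ by the Sylow-conjugating cosets $I,J,K,L$ to obtain the four lines $\langle(0,1)\rangle,\langle(1,0)\rangle,\langle(1,-\be)\rangle,\langle(1,-\al)\rangle$, none containing $(1,1)$. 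Your route explains \emph{why} $(1,1)$ is avoided (it ties the obstruction to $\al\neq-1$ and $\be\neq-1$), whereas the paper's eight checks are quicker to execute but opaque. One small remark: Construction \ref{conW} states $\be\neq-1$ explicitly but only $\al\neq1$; the needed $\al\neq-1$ follows because $\al$ has order $3$ while $-1$ has order $2$ in $\Z_p^*$, so you should say that rather than cite the Construction directly.
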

\begin{proof}
Take any $x \in \cent{\W}{-I}\cap \cent{\W}{a}$ with $x=(\bx,X)$ for some $\bx \in \Z_p^2$ and $X \in \mS$. By Lemma \ref{centnegi}, $x \in \mS$ so $\bx = (0,0)$ and $x = X$. Since $X$ commutes with $a=((1,1),-I)$, it follows that $(1,1)=(1,1)X^{-1}$. Any power of $X^{-1}$ also fixes $(1,1)$. Observe that $-I$ is the only element of order $2$ in $\mS$ and $-I$ does not fix $(1,1)$. So no power of $X^{-1}$ is $-I$ and hence the order of $X^{-1}$ must be odd. As $|\mS|=2^3.3$, the order of $X^{-1}$ must then be $3$ or $1$. From \cite{mackiw}, there are $8$ elements of order $3$ in $\SL(2,3)\cong\mS$. The $8$ elements
$$Z,JZ,KZ,LZ,Z^{-1},J^{-1}Z^{-1},K^{-1}Z^{-1},L^{-1}Z^{-1} \in \mS$$
have order $3$ and so  $X^{-1}$  must be one of them. However, none of them fix $(1,1)$, so instead the only possibility is that $X=I$ and $x=e_\W$.
\end{proof}

Now we give a lower bound on the diameter of $\cg{\W}$ by finding two elements of order $4$ in $\W$ with distance between them at least $6$ in $\cg{\W}$.
\begin{lemma}
\label{diam6lower}
$6 \leq \diam{\cg{\W}}$.
\end{lemma}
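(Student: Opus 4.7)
The plan is to exhibit two order-$4$ elements $g, h \in \W$ with $\dist{g}{h} \geq 6$ in $\cg{\W}$. I take $g := J$ and $h := ((0,1), J)$; using equation~\eqref{wproduct} one checks that $g^2 = -I$ and $h^2 = ((0,1) + (0,1)J^{-1}, -I) = ((1,1), -I) = a$, so both have order~$4$. By Lemma~\ref{order46}, $\cent{\W}{g} = \langle g \rangle$ has order~$4$, so $g$ has exactly the two neighbours $g^{-1}$ and $g^2$ in $\cg{\W}$, and similarly $h$ has neighbours $h^{-1}$ and $h^2 = a$. Since the only neighbour of $g^{-1}$ besides $g$ is $g^2$, any shortest path leaving $g$ for a vertex outside $\{g, g^{-1}\}$ must begin with the edge $g \to g^2$. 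Applying this observation at both endpoints yields $\dist{g}{h} = 2 + \dist{-I}{a}$, so it suffices to prove $\dist{-I}{a} \geq 4$.

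Lemma~\ref{noncominvolutions} already gives $\dist{-I}{a} \geq 3$, so I need only rule out length-$3$ paths $-I \sim u \sim w \sim a$. By Lemma~\ref{centnegi}, $u = (0, U)$ for some $U \in \mS \setminus \{I, -I\}$. The $p^2$ involutions of $\W$ form a single conjugacy class (Lemma~\ref{order2} shows each has centraliser of size~$24$, so the class of $-I$ has size $|\W|/24 = p^2$), and a direct computation shows that $x := (-\tfrac{1}{2}(1,1), I)$ satisfies $x^{-1}(-I)x = a$. Hence $\cent{\W}{a} = x^{-1}\mS x$, and I can write $w = x^{-1} u' x$ for some $u' = (0, U')$ with $U' \in \mS \setminus \{I, -I\}$. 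A short manipulation using~\eqref{wproduct} gives $xux^{-1} = (\tfrac{1}{2}(1,1)(U^{-1}-I), U)$, and translating $uw = wu$ then reduces the existence of such a path to the existence of commuting $U, U' \in \mS \setminus \{I, -I\}$ satisfying $(1,1)(U^{-1}-I)(U'-I) = 0$.

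The hard part will be ruling out this equation. Set $\bd := (1,1)(U^{-1}-I)$; by Lemma~\ref{noncominvolutions}, $\bd \neq 0$ (otherwise $U$ would lie in $\mS \cap \cent{\W}{a} = \{e\}$), so the condition becomes the demand that $U'$ stabilise $\bd$ under the right action. Any stabiliser of a nonzero vector in $\mS$ does not contain $-I$ and so has odd order, hence is trivial or cyclic of order~$3$. In the trivial case no valid $U'$ exists. Otherwise the stabiliser is $\langle W\rangle$ for some $W$ of order~$3$, and commuting with $U' \in \{W, W^2\}$ forces $U \in \cent{\mS}{W} \setminus \{I, -I\} = \{W, W^2, -W, -W^2\}$. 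For each of these four cases, using $W^2 + W + I = 0$, the product $(U^{-1} - I)(W - I)$ simplifies to one of $3I$, $-3W$, $-(2W+I)$, or $W + 2I$. Pairing with $(1,1)$, the first two force $3 \equiv 0 \pmod p$, while the latter two force $(1,1)$ to be a left eigenvector of $W$ with eigenvalue $-\tfrac{1}{2}$ or $-2$; since neither value satisfies the minimal polynomial $\lambda^2 + \lambda + 1$ of $W$ unless $p = 3$, every case contradicts $p = 3n+1$ being prime (so $p \geq 7$). Therefore $\dist{-I}{a} \geq 4$ and $\diam{\cg{\W}} \geq \dist{g}{h} \geq 6$.
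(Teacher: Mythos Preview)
Your overall architecture matches the paper's: pick two order-$4$ elements whose squares are the involutions $-I$ and $a=((1,1),-I)$, use Lemma~\ref{order46} to force any shortest path between them to pass through those squares, and then show $\dist{-I}{a}\geq 4$.  Where you diverge is in that last step.  The paper argues purely structurally: any hypothetical middle pair $x\in\cent{\W}{-I}$, $y\in\cent{\W}{a}$ with $x\sim y$ has $|x|,|y|\in\{2,3,4,6\}$; order~$2$ is excluded since $-I$ and $a$ are the unique involutions in their centralisers; order $4$ or $6$ forces $\cent{\W}{x}=\langle x\rangle$ by Lemma~\ref{order46}, so $y\in\langle x\rangle$; and $|x|=|y|=3$ makes $\langle x,y\rangle$ an abelian $3$-group inside $\W$, hence of order~$3$ since $9\nmid|\W|$, so again $y\in\langle x\rangle$.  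In every case this contradicts Lemma~\ref{noncominvolutions}.  Your route instead conjugates by an explicit $x$ to line up the two copies of~$\mS$, reduces $u\sim w$ to the matrix condition $(1,1)(U^{-1}-I)(U'-I)=0$ with $[U,U']=1$, and then runs an eigenvalue/stabiliser analysis.  Both work; the paper's version is shorter and uses only the lemmas already in hand, while yours gives concrete linear-algebraic content to the obstruction.

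One small slip to fix: you call $\lambda^2+\lambda+1$ ``the minimal polynomial of $W$'', but in your situation $W$ fixes the nonzero vector $\mathbf{d}$, so $1$ is an eigenvalue of $W$ and (since $p\neq 3$) the minimal polynomial is $(\lambda-1)(\lambda-\omega)$ for a primitive cube root $\omega$.  The conclusion is unaffected: the eigenvalues of $W$ lie among the cube roots of unity because $W^3=I$, and $(-\tfrac{1}{2})^3-1=-\tfrac{9}{8}$ and $(-2)^3-1=-9$ are nonzero in $\Z_p$ for $p\neq 3$, so neither $-\tfrac{1}{2}$ nor $-2$ can be an eigenvalue.
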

\begin{proof}
Put $a := ((1,1),-I)$ and $b := -I$. We will see that $\dist{a}{b} \geq 4$ in $\cg{\W}$, then deduce that an element of order $4$ commuting with $a$ has distance at least $6$ in $\cg{\W}$ from an element of order $4$ commuting with $b$.

From Lemma \ref{noncominvolutions}, no non-trivial elements of $\W$ commute with both $a$ and $b$. Now assume that $\dist{a}{b} = 3$, that is, that there exists non-trivial $x \in \cent{\W}{a}$ and $y \in \cent{\W}{b}$ such that $x \sim y$. In Lemma \ref{order2} we saw that $|\cent{\W}{a}| = |\cent{\W}{b}| = 24$ and so $|x|$ and $|y|$ divide $24$. Moreover, from Lemma \ref{order46}, there are no elements of orders $8, 12$ or $24$ in $\W$ so $|x|$ and $|y|$ are each one of $2,3,4$ or $6$. We will examine these different possibilities for the orders of $x$ and $y$ in turn.

Since $a$ and $b$ are involutions and by Lemma \ref{order2} are the unique involution in their respective centralisers, it follows that neither $x$ nor $y$ are involutions. Suppose that one of $|x|$ and $|y|$ is $4$ or $6$. Without loss of generality take $|x|=4$ or $6$.
Then by Lemma \ref{order46} the subgroup $\cent{\W}{x}$ is cyclic. As $y \in \cent{\W}{x}$ we have $y=x^k$ for some integer $k$, so $y\in \cent{\W}{a}\cap \cent{\W}{b}$, contradicting Lemma \ref{noncominvolutions}.

The only remaining possibility is that $|x| = |y| = 3$. Since $x$ and $y$ commute, every element of $\langle x, y \rangle$ can be written as $x^sy^t$ for some integers $0 \leq s,t\leq 2$ and so $|\langle x, y \rangle| \leq 9$. Also $3$ divides $|\langle x,y \rangle|$  which in turn divides $|\W|=2^3.3p^2$, where $p \neq 3$. Thus $|\langle x,y\rangle| = 3$ and $\langle x,y\rangle = \langle x\rangle$. Therefore $y \in \langle x\rangle$ and $y=x^k$ for some integer $k$. Now, as $x \in \cent{\W}{a}$, it follows that $y\in \cent{\W}{a}\cap \cent{\W}{b}$, contradicting Lemma \ref{noncominvolutions}.

For all possible values of $|x|$ and $|y|$ we have derived a contradiction; thus no such $x$ and $y$ can exist and $\dist{a}{b} \geq 4$ in $\cg{\W}$. Now by Lemma \ref{order2} we have that $\cent{\W}{a} \cong \cent{\W}{b} \cong \mS$. There are elements of order $4$ in $\mS$, for example $J$, so there exists some $q \in \cent{\W}{a}$ and $r\in \cent{\W}{b}$ with $|q|=|r|=4$. By Lemma \ref{trivialcentre}, the centre of $\W$ is trivial so $a,b,q$ and $r$ are all vertices of $\cg{\W}$.

Lemma \ref{order46} indicates that $\cent{\W}{q} = \langle q \rangle = \langle q^3 \rangle$ so the only non-trivial elements that commute with $q$ and $q^3$ are $q, q^2=a$ and $q^3$. As $\dist{a}{b} \geq 4$ neither $q$ nor $q^3$ equal $r$. So if a minimal path from $q$ to $r$ exists in $\cg{\W}$ then it must have $a$ as its second vertex. Similarly it must have $b$ as its penultimate vertex and therefore such a path must have length at least $2+\dist{a}{b}\geq 6$. Hence $6 \leq \diam{\cg{\W}}$.
\end{proof}

To complete the proof of  Theorem \ref{diam6} we need to show that the diameter is at most 6. The following lemmas enable us to find paths of length at most $6$ between any two non-central elements of $\W$.
\begin{lemma}
\label{threetop}
For $g \in \W$ with $|g|=3$, there exists an element of order $p$ in $\W$ commuting with $g$.
\end{lemma}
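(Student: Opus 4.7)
The plan is to use a Sylow-style reduction to a single explicit element of order $3$, namely $Z$, and then exhibit by direct computation an element of order $p$ in $\Z_p^2\leqslant\W$ that commutes with $Z$. Transporting by the conjugating element will then give us an element of order $p$ commuting with the original $g$.

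First I would show that every element of order $3$ in $\W$ is conjugate to a non-trivial power of $Z$. Write $g=(\bg,G)$ with $\bg\in\Z_p^2$ and $G\in\mS$; if $G=I$ then $g\in\Z_p^2$ and so $|g|$ divides $p$, which is impossible since $|g|=3$ and $p=3n+1\neq 3$. Hence $G$ has order $3$ in $\mS\cong\SL(2,3)$. Since $|\W|=2^3\cdot 3\cdot p^2$, the subgroup $\langle Z\rangle$ is a Sylow $3$-subgroup of $\W$, and so by Sylow's Theorems there exists $x\in\W$ with $g^x\in\langle Z\rangle\setminus\{e\}$, i.e.\ $g^x=Z^k$ for some $k\in\{1,2\}$.

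Next I would verify by direct calculation using equation \eqref{wproduct} that the element $h:=((0,1),I)\in\W$ commutes with $Z$. Indeed, the second coordinate of the vector $(0,1)$ under right multiplication by $Z=\bigl[\begin{smallmatrix}-\be&\al\\0&1\end{smallmatrix}\bigr]$ is fixed: $(0,1)Z=(0,1)$, and hence also $(0,1)Z^{-1}=(0,1)$. Then equation \eqref{wproduct} gives $hZ=((0,1),Z)=Zh$, and therefore $h$ commutes with every power of $Z$. Since $(0,1)\neq(0,0)$ and $p$ is prime, $h$ has order exactly $p$.

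Finally, conjugating by $x^{-1}$ sends $Z^k$ back to $g$ and sends $h$ to $h^{x^{-1}}$, an element of order $p$ commuting with $g$. The only point requiring care is the book-keeping with the action convention in \eqref{wproduct} when checking $(0,1)Z=(0,1)$, which is the computation the lemma statement is drawing attention to (``the important property of $Z$ fixing $(0,1)$''). No serious obstacle is expected.
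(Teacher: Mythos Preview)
Your proposal is correct and follows essentially the same approach as the paper: reduce via Sylow's Theorems to the case $g\in\{Z,Z^{-1}\}$, exhibit the explicit element $((0,1),I)$ of order $p$ commuting with $Z$ (because $(0,1)Z=(0,1)$), and then conjugate back. The paper's version is terser but the ideas and the key computation are identical.
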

\begin{proof}
 Firstly observe that  $((0,1),I)\in \W$ has order $p$ and commutes with the element $Z=\begin{bmatrix}-\be&\al\\0&1\end{bmatrix}$ of order $3$ in $\W$. By Sylow's Theorem, $g$ is conjugate to $Z$ or $Z^{-1}$, and so the result follows.
\end{proof}

\begin{lemma}
\label{twotop}
Any involution in $\W$ has distance at most $2$ from $((0,1),I)$ in $\cg{\W}$.
\end{lemma}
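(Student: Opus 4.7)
The plan is to use the multiplication formula \eqref{wproduct} to first classify the involutions of $\W$ and then, for each involution $g$, exhibit an element $h \in \W \setminus \gcentre{\W}$ commuting with both $g$ and $t := ((0,1), I)$.

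For the classification, computing $(\mathbf{v}, G)^2$ via \eqref{wproduct} shows that an involution requires $G^2 = I$. Since the only involution of $\mS \cong \SL(2,3)$ is $-I$ (as $\mQ \cong Q_8$ contains a unique involution), every involution of $\W$ has the form $g = (\mathbf{v}, -I)$ for some $\mathbf{v} \in \Z_p^2$; the accompanying vector condition $\mathbf{v}(I + (-I)^{-1}) = 0$ is then automatic.

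For the witness, a direct computation from \eqref{wproduct} shows that $h = (\mathbf{w}, H)$ commutes with $t$ if and only if $H$ fixes the row vector $(0,1)$ under right multiplication, and commutes with $g = (\mathbf{v}, -I)$ if and only if $2\mathbf{w} = \mathbf{v}(I - H^{-1})$ (the $\mS$-component condition is automatic since $-I$ is central in $\mS$). The bottom row of $Z$ is $(0, 1)$, so $Z$ stabilises $(0,1)$; thus taking $H := Z$ and $\mathbf{w} := 2^{-1}\mathbf{v}(I - Z^{-1})$ (valid as $p$ is odd) yields $h = (\mathbf{w}, Z)$ commuting with both $g$ and $t$. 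Since the $\mS$-component of $h$ is $Z \neq I$, we have $h \neq e$, and by Lemma \ref{trivialcentre} this $h$ is non-central; hence $g \sim h \sim t$ is a path of length at most $2$ in $\cg{\W}$.

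The heart of the argument is that $\langle Z \rangle$ stabilises the vector $(0,1)$ in $\mS$ --- the same observation motivating Lemma \ref{threetop}. Once this is in hand, the rest is an explicit linear-algebra computation; no case analysis on the involution $g$ is needed, because the construction of $h$ works uniformly in $\mathbf{v}$. One can additionally note that $g \sim t$ never holds (the commutator condition reduces to $2(0,1) = 0$, impossible for odd $p$), so distance $2$ is in fact the exact distance from any involution to $t$.
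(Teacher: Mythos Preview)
Your proof is correct and follows essentially the same approach as the paper: both classify involutions as $(\mathbf{v},-I)$ and then exhibit a commuting witness of the form $(\mathbf{w},Z)$, exploiting that $Z$ fixes the row vector $(0,1)$. In fact your formula $\mathbf{w}=2^{-1}\mathbf{v}(I-Z^{-1})$ evaluates exactly to the paper's explicit element $\bigl((a_1(1+\be)/2\be,\,-a_1\al/2\be),\,Z\bigr)$, so the two arguments coincide, with yours giving a cleaner derivation of where that element comes from.
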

\begin{proof}
Let $a$ be an involution in $\W$. Then $a = ((a_1,a_2),A)$ for some $a_1, a_2 \in \Z_p$ and $A \in \mS$ with $A^2=I$. We must have $A \neq I$ as otherwise $|a|=|(a_1,a_2)|=p$~or~$1$. Thus $A = -I$ as this is the only involution in $\mS$. We can see now that $((a_1(1+\be)/2\be,-a_1\al/2\be),Z) \in \W$ is a non-trivial element that commutes with both $a$ and $((0,1),I)$. 
\end{proof}

\begin{lemma}
\label{orderp}
All elements of order $p$ in $\W$ commute.
\end{lemma}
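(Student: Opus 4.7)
The plan is to show that every element of order $p$ in $\W$ lies inside the abelian normal subgroup $\Z_p^2$ identified in Construction \ref{conW}, from which commutativity is immediate. The whole argument is a routine Sylow/normality observation.

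First I would record the orders involved. We have $|\W| = 24 p^2$, and since $p = 3n+1$ is prime, necessarily $p \neq 2$ and $p \neq 3$, so $\gcd(p, 24) = 1$. Consequently any Sylow $p$-subgroup of $\W$ has order exactly $p^2$. The subgroup $\{(\bx, I) \mid \bx \in \Z_p^2\}$ that Construction \ref{conW} identifies with $\Z_p^2$ has order $p^2$ and is therefore a Sylow $p$-subgroup of $\W$.

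Next I would note that because $\W = \Z_p^2 \rtimes \mS$ is a semidirect product with $\Z_p^2$ the normal factor, this Sylow $p$-subgroup is normal in $\W$. By Sylow's Theorems a normal Sylow subgroup is the unique one, so $\Z_p^2$ is the only Sylow $p$-subgroup of $\W$. Since every element of order $p$ in $\W$ generates a $p$-subgroup and hence lies in some Sylow $p$-subgroup, every element of order $p$ must lie in $\Z_p^2$.

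Finally, $\Z_p^2$ is abelian, so any two elements of order $p$ in $\W$ commute, proving the lemma. There is no real obstacle here; the only thing to be careful of is checking that $p$ really is coprime to $24$, which is forced by the shape $p = 3n+1$ together with $p$ being prime.
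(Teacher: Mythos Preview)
Your proof is correct and follows essentially the same approach as the paper: identify $\Z_p^2$ as the unique (normal) Sylow $p$-subgroup of $\W$, so it contains every element of order $p$, and conclude since it is abelian. The paper's argument is more terse but identical in substance; your extra care in verifying $\gcd(p,24)=1$ is a welcome detail.
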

\begin{proof}
The abelian subgroup $\{ (\bx,I) | \bx \in \Z_p^2 \} \leq \W$ is normal in $\W$ and so is the unique Sylow $p$-subgroup of $W$. Therefore it contains all the elements of order $p$ in $\W$.
\end{proof}

It is relatively easy to now construct a path of length at most $6$ between any two vertices of $\cg{\W}$.
\begin{lemma}
\label{diam6upper}
$\diam{\cg{\W}}\leq 6$.
\end{lemma}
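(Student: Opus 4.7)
The plan is to single out the element $v := ((0,1), I) \in \W$, which has order $p$ and is the distinguished hub vertex appearing in Lemmas~\ref{threetop}, \ref{twotop} and \ref{orderp}. I will show $\dist{g}{v} \leq 3$ in $\cg{\W}$ for every non-identity $g \in \W$; since $\W$ has trivial centre by Lemma~\ref{trivialcentre}, every such $g$ is a vertex of $\cg{\W}$, and the triangle inequality then yields $\dist{g}{h} \leq \dist{g}{v} + \dist{v}{h} \leq 6$ for any two non-central elements $g$ and $h$, which is exactly what we want.

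To establish $\dist{g}{v} \leq 3$ for an arbitrary $g \neq e$, I would first reduce to an element of prime order. Since $|\W| = 2^3 \cdot 3 \cdot p^2$, any prime dividing $|g|$ lies in $\{2, 3, p\}$; fixing such a prime $q$, the power $g' := g^{|g|/q}$ has order $q$ and commutes with $g$, so $\dist{g}{g'} \leq 1$ (with $g' = g$ when $|g|$ is already $q$). It then remains to bound $\dist{g'}{v} \leq 2$ in each of the three possible cases for $|g'|$. If $|g'| = p$ the bound is immediate from Lemma~\ref{orderp}. If $|g'| = 3$, Lemma~\ref{threetop} produces an element $w \in \W$ of order $p$ commuting with $g'$, and Lemma~\ref{orderp} then supplies the edge $w \sim v$ (or the equality $w = v$), giving the path $g' \sim w \sim v$ of length at most two. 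Finally, if $|g'| = 2$, Lemma~\ref{twotop} is precisely the required statement.

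The substantive content of the upper bound is already packaged into the three preceding lemmas, so the work here is essentially to assemble them through the hub $v$. I do not anticipate a significant obstacle; the only bookkeeping is to confirm that each intermediate vertex in the concatenated path is genuinely non-identity and distinct from its neighbours, which is routine from comparing orders (for instance $|g'| \neq |g|$ whenever $g' \neq g$, and $|w| = p$ is coprime to $2$ and $3$, preventing coincidences with $g'$). Combining $\dist{g}{g'} \leq 1$ with $\dist{g'}{v} \leq 2$ gives $\dist{g}{v} \leq 3$, and the triangle inequality closes out the proof of $\diam{\cg{\W}} \leq 6$.
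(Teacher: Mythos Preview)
Your proposal is correct and follows essentially the same approach as the paper: both arguments use the hub vertex $((0,1),I)$, reduce an arbitrary non-identity $g$ to a commuting element of prime order (you take a power of $g$, the paper picks an element of prime order in $\cent{\W}{g}$), and then invoke Lemmas~\ref{orderp}, \ref{threetop}, and \ref{twotop} according to whether that prime is $p$, $3$, or $2$. The only difference is cosmetic.
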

\begin{proof}
Set $w := ((0,1),I) \in \W \setminus \gcentre{\W}$. We will show that any $g \in \W\setminus \gcentre{\W}$ has distance at most $3$ from $w$ in $\cg{\W}$. A path of length at most $6$ between any two non-central elements $g,h$ of $\W$ can then be constructed by taking at most three steps from $g$ to $w$ and then at most three step from $w$ to $h$.

For the following calculations, recall by Lemma \ref{trivialcentre} that any non-trivial element of $\W$ is non-central and so in $\cg{\W}$. Let $g$ be any non-trivial element in $\W$ and let $x$ be an element of prime order in the non-trivial group $\cent{\W}{g}$. Then $|x|$ is one of $2, 3$ or $p$. If $|x|=p$ then, by observing that also $|w|=p$ and applying Lemma \ref{orderp}, we have $x \sim w$ and $g \sim x \sim w$ is a path of length $2$ from $g$ to $w$ in $\cg{\W}$. If $|x|=3$ then, by Lemma \ref{threetop}, there exists some $y \in \W$ of order $p$ commuting with $x$ and $g \sim x \sim y \sim w$ is a path of length $3$ from $g$ to $w$ in $\cg{\W}$. The remaining possibility is that $|x| = 2$. Then Lemma \ref{twotop} tells us that $\dist{x}{w} \leq 2$ in $\cg{\W}$ and it follows that $\dist{g}{w} \leq 3$.
\end{proof}

The proof of Theorem \ref{diam6} is concluded by combining Lemmas \ref{diam6lower} and \ref{diam6upper}. By Dirichlet's Theorem \cite[Theorem 10.9]{nathanson}, there are infinitely many choices for the prime $p$ so we obtain infinitely many groups with trivial centre and commuting graph having diameter $6$.


\end{document}